\newtheorem{thm}{Theorem}[section]
\newtheorem{prop}[thm]{Proposition}
\newtheorem{lemma}[thm]{Lemma}
\theoremstyle{definition}  
\newtheorem{example}[thm]{Example}
\newtheorem{defn}[thm]{Definition}
\theoremstyle{remark}  
\newtheorem{remark}[thm]{Remark}
\newcommand{\calB}{\ensuremath{\mathcal{B}} }
\newcommand{\calC}{\ensuremath{\mathcal{C}} }
\newcommand{\D}{\displaystyle}
\newcommand{\frakf}{\ensuremath{\mathfrak{f}} }
\newcommand{\frakh}{\ensuremath{\mathfrak{h}} }
\newcommand{\fraki}{\ensuremath{\mathfrak{i}} }
\newcommand{\frakm}{\ensuremath{\mathfrak{m}} }
\newcommand{\frakn}{\ensuremath{\mathfrak{n}} }
\newcommand{\frakv}{\ensuremath{\mathfrak{v}} }
\newcommand{\frakz}{\ensuremath{\mathfrak{z}} }
\newcommand{\frakso}{\ensuremath{\mathfrak{so}} }
\newcommand{\fraksu}{\ensuremath{\mathfrak{su}} }
\newcommand{\ad}{\operatorname{ad}}
\newcommand{\Id}{\operatorname{Id}}
\newcommand{\End}{\operatorname{End}}
\newcommand{\rank}{\operatorname{rank}}
\newcommand{\myspan}{\operatorname{span}}
\newcommand{\trace}{\operatorname{trace}}
\newcommand{\Spec}{\operatorname{Spec}}
\newcommand{\boldC}{\ensuremath{\mathbb C}}
\newcommand{\boldN}{\ensuremath{\mathbb N}}
\newcommand{\boldR}{\ensuremath{\mathbb R}}
\newcommand{\la}{\langle}
\newcommand{\ra}{\rangle}
\begin{document}

\title{A Different Perspective on H-like Lie Algebras}

\author{Cathy Kriloff}
\email{krilcath@isu.edu}
\author{Tracy Payne}
\email{payntrac@isu.edu}
\address{Department of Mathematics and Statistics, Idaho State
  University, Pocatello ID 83209-8085}

\keywords{nilmanifold, H-type, H-like, Heisenberg type, Heisenberg
  algebra}
\subjclass[2010]{53C30  (primary), and 22E25 (secondary)
} 

\begin{abstract}
We characterize
H-like Lie algebras in terms of  subspaces of cones over conjugacy classes 
in $\frakso(\boldR^q)$, translating the classification problem for H-like 
Lie algebras to an equivalent problem in linear algebra.  We study properties 
of H-like Lie algebras, present new methods for constructing them, including 
tensor products and central sums, and classify H-like Lie algebras whose 
associated $J_Z$-maps have rank two for all nonzero $Z$.
\end{abstract}

\maketitle

\section{Introduction}

A {\em nilmanifold} is a simply connected nilpotent Lie group endowed
with a left-invariant Riemannian metric. 
Kaplan defined the class of nilmanifolds of {\em Heisenberg type}
(also called {\em H-type})~\cite{kaplan-80}.  Nilmanifolds in this class have
many extraordinary symmetry properties (\cite{eberlein-2, btv}). Most notably, they
admit solvable extensions, {\em Damek-Ricci spaces}, which are
harmonic spaces but not necessarily symmetric spaces (\cite{damek-ricci2, btv}).

It is natural to consider  defining conditions for nilmanifolds weaker than those in the definition of
H-type, with the hope that some nice properties of H-type nilmanifolds may be
preserved in the larger class.   Many generalizations of the H-type property 
have been studied.  The {\em
nonsingular} nilpotent Lie algebras were first studied by M\'etivier and later
studied by others from both geometric and analytic perspectives under
various names (\cite{metivier-80, eberlein-1, muller-seeger-04,
kaplan-tiraboschi-13, lauret-oscari-14}).  For {\em
pseudo H-type} or {\em generalized H-type} nilmanifolds, the inner
product is assumed to be nondegenerate instead of positive definite,
and even possibly sub-semi-Riemannian (\cite{ciatti-00, godoy-et-al-13}).
Lauret generalized the notion of H-type to {\em modified H-type} 
in~\cite{lauret-99}, and this was further generalized in~\cite{jang-et-al-08}.  

  In analyzing properties of the length spectrum of two-step
nilmanifolds, Gornet and Mast defined the notion of an {\em H-like}
nilmanifold (\cite{gornet-mast-00}) as a generalization of an H-type
nilmanifold.  Some of the remarkable geometric properties possessed by
H-type nilmanifolds have natural analogs held by H-like nilmanifolds
(\cite{gornet-mast-00, decoste-demeyer-mast}).
 In this work we study the metric Lie algebras
associated to 
H-like nilmanifolds and examine their properties.  (We will give a precise
definition of $H$-like in Definition~\ref{defn H-like}.)

Quite a few low-dimensional examples of H-like nilmanifolds,
including continuous families of two-step nilmanifolds with center of dimension greater than one, have been
found (\cite{gornet-mast-00, decoste-demeyer-mast, schroeder-thesis}).  
There is a one-to-one correspondence
between nilmanifolds and metric nilpotent Lie algebras.   Two
nilmanifolds are isometric if and only if the corresponding metric
nilpotent Lie algebras are metrically isomorphic
(\cite{wilson-82}).  
In~\cite{decoste-demeyer-mast}, a general method for constructing H-like
 Lie algebras using representations of $\fraksu(2)$ was given, and 
 in~\cite{decoste-demeyer-mainkar} H-like 
Lie algebras defined by undirected, uncolored graphs were
classified.

 Fix a two-step metric nilpotent Lie algebra
$(\frakn,Q)$. Let $\frakz = [\frakn,\frakn]$ and let $\frakv =
\frakz^\perp$. (Note that some authors use $\frakz$ to mean the center
of $\frakn$ instead of the commutator.)  If $\dim \frakz = p$ and
$\dim \frakv = q$, we say that $\frakn$ is of type $(p,q)$. Define $J:
\frakz \to \End(\frakv)$ by
\begin{equation}\label{def J} \la J(Z) X, Y \ra = \la Z, [X,Y] \ra \qquad \text{for $X,Y \in
    \frakv, Z \in \frakz$.}\end{equation}
The map $J$ is linear and $J_Z : \frakv \to \frakv$ is skew-symmetric
for all $Z$.   Furthermore, $J(\frakz)$ is a $p$-dimensional subspace
of $\frakso(\frakv)$ (see Lemma~\ref{p dim}).

Conversely, given finite-dimensional inner product spaces $\boldR^p$ and
$\boldR^q$ and a nonzero linear map $J : \boldR^p \to \frakso(\boldR^q),$  there is a
two-step metric nilpotent Lie algebra $(\frakn,Q)$ defined by $J.$   The underlying
vector  space is 
$\frakn = \frakv \oplus \frakz = \boldR^q \oplus \boldR^p$ and the
inner product $Q$ on $\frakn$ is the
orthogonal sum of the inner products on $\frakv$ and $\frakz.$  The
Lie bracket is defined by \eqref{def J} and the assumptions that
$[\frakv,\frakv]$ is contained in $\frakz$ and $\frakz$  
is central.      Two such nilpotent Lie algebras defined
by $J: \boldR^p \to \frakso(\boldR^q)$ and $J': \boldR^p \to
\frakso(\boldR^q),$  define metrically isomorphic metric Lie algebras if
and only if there are $A \in O(q)$ and $B \in O(p)$ so that 
$J'(BZ)
= A \circ J(Z) \circ A^T$  for all $Z \in \frakz$ (see \cite{gordon-wilson-97}).

Because the maps $J(Z)$ are 
skew-symmetric, any nonzero eigenvalues of
$J(Z)$ are purely imaginary, and the eigenvalues of $J(Z)$
completely determine $J(Z)$ up to conjugacy by an element of
$GL(\frakv)$. We use a multiset $S$ to record the eigenvalues
of $J(Z)$. (A {\em multiset}  is a set $S$ 
endowed with a multiplicity function from $S$ to $\boldN$.) Conjugacy
classes of nonzero skew-symmetric maps in $\End(\frakv)$ are indexed
by multisets $S$ of eigenvalues that are purely imaginary with the
multiplicities of $bi$ and $-bi$ the same for all positive real
numbers $b$.  We will call such a multiset {\em admissible}.

Gornet and Mast originally defined H-like Lie algebras in terms
of totally geodesic submanifolds.  There are
several equivalent characterizations of H-like Lie algebras
(see~\cite{gornet-mast-00} and~\cite{decoste-demeyer-mast}); we take one of these as part (3) in the following.
\begin{defn}\label{defn H-like} Let $(\frakn,Q)$ be a two-step
nilpotent Lie algebra endowed with inner product $Q$.  Let $S$ be an admissible
multiset.  
  \begin{enumerate}
  \item{The metric Lie algebra $(\frakn,Q)$ is {\em H-type} if for all
$Z \in \frakz$, $J_Z^2 = -\|Z\|^2 \Id_\frakv$, where $\Id_\frakv$ is
the identity map for $\frakv$. }
  \item{The metric Lie algebra $(\frakn,Q)$ has {\em constant
        $J$-spectrum} $S$  if
       for all unit 
$Z \in \frakz$, the map $J_Z$ has constant spectrum $S$. 
}
  \item{The metric Lie algebra $(\frakn,Q)$ is {\em H-like} if
      $(\frakn,Q)$  has constant $J$-spectrum and $\frakn$ has no nontrivial 
abelian factors. }
  \end{enumerate}
When $(\frakn,Q)$ has constant $J$-spectrum, the rank of $J_Z$ is
constant for nonzero $Z;$ this number is called the {\em $J$-rank} of
$(\frakn,Q)$.
\end{defn}
Examples are given in Section~\ref{background}.  It follows from the
definitions that
\[ \text{H-type}  \Rightarrow \text{H-like}
  \Rightarrow \text{constant $J$-spectrum}.\]
  
In Gornet and Mast's definition of H-like they take $\frakz$
to be the center of $\frakn$ rather than the commutator, so that any
abelian factor of $\frakn$ lies in $\frakz$. We take $\frakz$ to be the commutator
so that any abelian factor is contained  in $\frakv$.

A  $p$-dimensional subspace  $W$ of $\frakso(\boldR^q)$ defines a two-step
metric nilpotent Lie algebra of type $(p,q)$. 
\begin{defn}\label{standard}   
Let  $W$  be a  $p$-dimensional subspace of $\frakso(\boldR^q).$
Define the metric nilpotent Lie algebra $(\frakn,Q)$ by taking as the
 underlying vector space $\frakn =
W \oplus \boldR^q,$ and endowing this vector space with the inner product $\la ,
\ra^\ast$
so that $W$ and $\boldR^q$ are orthogonal, the restriction of   $\la ,
\ra^\ast$ to  $W$  is given by the restriction of the Frobenius inner
product to $W$, and  $\la ,
\ra^\ast$ is the standard inner product on $\boldR^q$; i.e., the one
with
 respect to
which the  standard basis is orthonormal. 
 The Lie bracket for $\frakn$ is defined by
assuming that $W$ is central and letting $J: W \to \frakso(\boldR^q)$
be the inclusion map:  for $Z \in W$ and $X, Y \in \boldR^q,$
$\la [X,Y], Z \ra^\ast = \la  Z(X), Y\ra_{\boldR^q}.$ 
 Such a metric nilpotent
Lie algebra is called the  {\em standard two-step metric nilpotent Lie
  algebra defined by $W.$}
\end{defn}
Eberlein showed that every two-step metric nilpotent Lie algebra is metrically isomorphic
to a standard  two-step metric nilpotent Lie algebra of type $(p,q)$
(Proposition 3.1.2, \cite{eberlein-geometry}).    The standard metric
nilpotent Lie algebras defined by two $p$-dimensional
subspaces $W_1$ and $W_2$ of $\frakso(\boldR^q)$ are isomorphic if and
only if there exists $g \in GL_q(\boldR)$ so that $W_2 = g W_1 g^T$
(Proposition 3.1.3, \cite{eberlein-geometry}). 

Eberlein used this point of view to study a variety of problems involving two-step
nilpotent Lie algebras, including moduli spaces, 
 optimal metrics,  metrics with geodesic flow invariant Ricci
 tensors, and Riemannian submersions and lattices
 (\cite{eberlein-geometry}, \cite{eberlein-moduli}, \cite{eberlein-submersions-lattices}). 
Gordon and Kerr used the correspondence between standard
 metric two-step nilpotent Lie algebras and subspaces
of $\frakso(\boldR^q)$ to
characterize  Carnot Einstein solvmanifolds in terms of subspaces of $\frakso(\boldR^q)$   that they
called ``orthogonal uniform subspaces''  (\cite{gordon-kerr-01}).

Let $\frakv$ be a $q$-dimensional vector space, where $q \ge 2,$
let $S$ be an admissible multiset of size $q$, and let $\calC_S$ denote the
conjugacy class of matrices in $\frakso(\frakv)$ with eigenvalues given by
the multiset $S$. Let $\boldR \calC_S = \{ r A \, : \, A \in
\calC_S, r \in \boldR \}$ be the cone over the conjugacy class $\calC_S$.  It is a
subset of $\frakso(\frakv).$  
The main innovation of our work is to take the perspective of
Eberlein, using standard nilpotent Lie algebras, and to re-interpret
the definition of H-like using cones over conjugacy class in
$\frakso(\frakv).$  This makes some of the proofs we present here much
more simple than they would be otherwise.

Our main theorem is that for any admissible multiset $S,$ 
there is a correspondence between H-like metric nilpotent 
Lie algebras of type $(p,q)$ whose spectrum is a multiple of $S$ 
and standard two-step metric nilpotent Lie algebras defined by
$p$-dimensional subspaces of $\boldR \calC_S$ in
$\frakso(\frakv)$.  

\begin{thm}\label{convex thm} 
Let $S$ be an admissible multiset with size $q \ge 2$.
Let $\boldR \calC_S$ be the cone over the conjugacy class of elements
of $\frakso(\boldR^q)$ with spectrum $S$.  
\begin{enumerate}
\item  If $W$ is a $p$-dimensional
subspace of $\boldR \calC_S$, then the standard two-step metric nilpotent Lie
algebra of type $(p,q)$ defined by $W$ has constant $J$-spectrum $S$. 
\item If $(\frakn = \frakv \oplus \frakz, Q)$ is a two-step metric nilpotent 
Lie algebra of type $(p,q)$ which has constant $J$-spectrum $S,$ then 
$J(\frakz)$ is a $p$-dimensional subspace of $\boldR \calC_S$, and $(\frakn,Q)$  
is homothetic to a standard two-step metric nilpotent Lie algebra. 
\end{enumerate}
\end{thm}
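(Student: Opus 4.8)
The plan is to read both parts off Definitions~\ref{standard} and~\ref{defn H-like}, the only substantive input being the normal form for real skew-symmetric matrices. Since $\boldR\calC_S = \boldR\calC_{tS}$ for every $t>0$, we first rescale $S$ so that $\sum_{\lambda\in S}|\lambda|^2 = 1$, equivalently so that every $A\in\calC_S$ has unit Frobenius norm $\|A\|_F = \bigl(-\trace(A^2)\bigr)^{1/2} = 1$. (Without this normalization the same arguments give, in (1), constant $J$-spectrum equal to a fixed positive multiple of $S$, and, in (2), a genuine homothety rather than an isometry, which is what the word ``homothetic'' in the statement accommodates.) We also fix an orthonormal basis of $(\frakv,Q|_\frakv)$, so that we may take $\frakv = \boldR^q$ with its standard inner product.

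For part (1), let $W\subseteq\boldR\calC_S$ be a $p$-dimensional subspace and let $(\frakn,Q)$ be the standard two-step metric nilpotent Lie algebra it defines. By Definition~\ref{standard} the map $J\colon W\to\frakso(\boldR^q)$ attached to $(\frakn,Q)$ is the inclusion, so $J_Z = Z$ for all $Z\in W$ and $W$ carries the Frobenius inner product. If $Z\in W$ is a unit vector then $Z\neq 0$, so $Z = rA$ with $A\in\calC_S$ and $r\in\boldR\setminus\{0\}$; hence $1 = \|Z\|_F = |r|\,\|A\|_F = |r|$, and the spectrum of $J_Z = rA$ is $rS$, which equals $S$ since $|r| = 1$ and $S$ is admissible. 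As $Z$ was arbitrary, $(\frakn,Q)$ has constant $J$-spectrum $S$.

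For part (2), let $(\frakn = \frakv\oplus\frakz,Q)$ have type $(p,q)$ and constant $J$-spectrum $S$. For nonzero $Z\in\frakz$ put $U = Z/\|Z\|$; then $J_U$ is skew-symmetric with spectrum $S$, so $J_U\in\calC_S$ — this is exactly where the normal form is used, since two real skew-symmetric matrices with the same eigenvalue multiset are $O(q)$-conjugate — whence $J_Z = \|Z\|\,J_U\in\boldR\calC_S$ by linearity of $J$, and also $J_0 = 0\in\boldR\calC_S$. Since $J(\frakz)$ is a $p$-dimensional subspace of $\frakso(\frakv)$ by Lemma~\ref{p dim}, it is a $p$-dimensional subspace of $\boldR\calC_S$. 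For the homothety claim, set $W := J(\frakz)$, let $(\tilde\frakn,\tilde Q)$ be the standard two-step metric nilpotent Lie algebra it defines, and define $\phi\colon\frakn\to\tilde\frakn$ by $\phi|_\frakv = \Id_{\boldR^q}$ and $\phi(Z) = J_Z$ for $Z\in\frakz$. Then $\phi$ is a linear isomorphism, because $J\colon\frakz\to W$ is onto by definition of $W$ and is injective (if $J_Z = 0$ then $Z\perp[\frakv,\frakv] = \frakz$, so $Z = 0$). It is an isometry, because $\|J_U\|_F^2 = \sum_{\lambda\in S}|\lambda|^2 = 1$ for unit $U\in\frakz$ shows that $Z\mapsto J_Z$ is an isometry of $(\frakz,Q|_\frakz)$ onto $W$ with the Frobenius inner product, while $\phi|_\frakv$ is an isometry and $\frakv\perp\frakz$, $\boldR^q\perp W$. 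And it is a Lie algebra homomorphism: since $\frakz$ and $W$ are central it suffices to check $\phi([X,Y]_\frakn) = [X,Y]_{\tilde\frakn}$ for $X,Y\in\boldR^q$, and pairing both sides against an arbitrary $J_Z\in W$ (these exhaust $W$) in the Frobenius inner product turns this, via $\la[X,Y]_\frakn,Z\ra_Q = \la J_Z X,Y\ra$ from \eqref{def J}, the corresponding identity $\la[X,Y]_{\tilde\frakn},J_Z\ra_{\tilde Q} = \la J_Z X,Y\ra$ in $\tilde\frakn$, and the isometry relation $\la J_A,J_B\ra_F = \la A,B\ra_Q$, into a tautology. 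Hence $\phi$ is a metric isomorphism onto a standard algebra, and in particular $(\frakn,Q)$ is homothetic to one.

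We expect the only real difficulty to be organizational rather than conceptual: keeping track of the Frobenius inner product on $W = J(\frakz)$ against $Q|_\frakz$ and the scalar relating them, so that $\phi$ is confirmed to be a homothety with the correct ratio; the bracket identity is the single routine computation. (The existence of \emph{some} homothety onto \emph{some} standard algebra is already immediate from Eberlein's Proposition~3.1.2 quoted above; the value of the explicit $\phi$ is that it pins down that standard algebra as the one defined by $J(\frakz)\subseteq\boldR\calC_S$, which is what Theorem~\ref{convex thm} asserts.)
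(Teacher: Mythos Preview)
Your proof is correct and follows the same line as the paper's: for (1), a unit $Z$ has $J_Z=Z\in W\subseteq\boldR\calC_S$, and for the first clause of (2) one combines Lemma~\ref{p dim} with the observation that constant $J$-spectrum places the unit sphere of $\frakz$ inside $\calC_S$. The paper's own proof is terser on both counts---it concludes only that the spectrum in (1) is ``a scalar multiple of $S$'' and does not argue the homothety clause of (2) at all in the proof body, implicitly relying on Eberlein's Proposition~3.1.2 cited in the introduction---so your normalization $N(S)=1$ and the explicit metric isomorphism $\phi$ onto the standard algebra defined by $J(\frakz)$ (essentially a reproof of Proposition~\ref{J unitary} plus a bracket check) supply details the paper leaves to the reader or to the cited reference.
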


Therefore the problem of classifying H-like Lie algebras is equivalent
to the problem of classifying linear subspaces of $\frakso(\boldR^q)$
that are contained in subsets of the form $\boldR \calC_S$.  This kind of
problem-- finding sets of matrices so that nonlinear properties such
as rank or spectrum are preserved under linear combinations-- has been
studied since at least as early as Hurwitz and Radon
(\cite{hurwitz-23, radon-22}).
For example, see~\cite{ellia-menegatti, ilic-landsberg-99, eisenbud-harris-88, westwick-87, beasley-81,
flanders-62} for subspaces of
matrices having fixed or bounded rank, see~\cite{boralevi-mezzetti,
fania-mezzetti, manivel-mezzetti} for subspaces of skew-symmetric matrices, 
and see~\cite{skrzynski-02} for subspaces of cones over conjugacy
classes.  

Vector spaces of matrices of maximal rank two were first classified in~\cite{atkinson-83}.  Early classification results are reviewed and obtained as special cases of a more general theorem in~\cite{eisenbud-harris-88} (see Theorem 1.1).
\begin{thm}\label{rank 2 classification}~\cite{atkinson-83}
  A vector space of matrices of rank $\le 2$ is equivalent to one of the following:
  \begin{enumerate}
  \item a subspace of the space of matrices of the form 
$\D{ \left\{
    \begin{bmatrix}
      0 & 0 & \cdots & 0 & \ast \\ 
      0 & 0 & \cdots & 0 & \ast \\ 
      \vdots & \vdots & \ddots & \vdots & \vdots \\
      0 & 0 & \cdots & 0 & \ast \\ 
      \ast & \ast & \cdots & \ast  & \ast \\ 
    \end{bmatrix}
\right\}, }$  
  \item $\frakso(\boldR^3)$.
  \end{enumerate} 
\end{thm}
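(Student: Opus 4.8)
The plan is to induct on $m+n$ for a space $W$ of $m\times n$ matrices of rank $\le 2$, using row and column operations (the equivalence relation) to normalize one rank-two member of $W$ and then propagate the normalization to the rest of the space; the algebra $\frakso(\boldR^{3})$ will emerge as the single configuration that resists all further simplification. First, if every nonzero element of $W$ has rank $\le 1$, then the classical structure of rank-one spaces applies: writing members as $u\,\xi^{T}$, all members lying in a common linear space either share the vector $u$ or share the covector $\xi$, so after permuting rows and columns $W$ is a subspace of the matrices of form (1). Hence from now on $W$ contains a matrix $\Phi$ of rank exactly two, and, applying $GL\times GL$, we may take $\Phi=\left(\begin{smallmatrix}I_{2}&0\\0&0\end{smallmatrix}\right)$.

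The engine is a perturbation lemma. For $\Psi\in W$ write $\Psi=\left(\begin{smallmatrix}A&B\\C&D\end{smallmatrix}\right)$ in blocks matching a splitting of the domain into $\ker\Phi$ and a complement and of the codomain into $\operatorname{im}\Phi$ and a complement. Since $\rank(\Phi+t\Psi)\le 2$ for all $t$, for small $t$ one forms the Schur complement of the invertible block $I+tA$ and obtains $tD-t^{2}C(I+tA)^{-1}B\equiv 0$ in $t$; the coefficient of $t$ forces $D=0$. Thus every member of $W$ sends $\ker\Phi$ into $\operatorname{im}\Phi$, i.e. $W$ compresses the codimension-two subspace $\ker\Phi$ into the two-plane $\operatorname{im}\Phi$. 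Applying the same idea to the off-diagonal blocks, if some member has $\rank C=2$ then a generic perturbation of it still has $\rank C=2$, and for any matrix of rank $\le 2$ with $\rank C=2$ the block $B$ must vanish; hence $B\equiv 0$ on all of $W$, and dually with rows and columns exchanged. So one of three situations occurs: (I) every member has image in $\operatorname{im}\Phi$ (at most two nonzero rows); (II) every member kills $\ker\Phi$ (at most two nonzero columns); or (III) the linear spaces $\{B(\Psi):\Psi\in W\}$ and $\{C(\Psi):\Psi\in W\}$ both consist of matrices of rank $\le 1$.

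Situations (I) and (II) are compression spaces on two rows, resp. two columns, which the statement records inside (1) — and for $W\subseteq\frakso(\boldR^{q})$ they force the effective size down to two, hence a literal subspace of form (1). In situation (III), apply the rank-one classification to $\{C(\Psi)\}$: if these blocks have a common kernel line, that line together with $\ker\Phi$ is a hyperplane of the domain compressed by $W$ into $\operatorname{im}\Phi$, and re-running the perturbation lemma on this hyperplane shows that either every member already has image in $\operatorname{im}\Phi$ (back to (I)) or $\{C(\Psi)\}$ sharpens to a common image line, landing $W$ in form (1); if instead the blocks $\{C(\Psi)\}$ have a common image line from the start, then $\operatorname{im}\Psi$ lies in a fixed three-space for all $\Psi$, so either the row count strictly drops or $m=3$, and the dual argument with $\{B(\Psi)\}$ drives $n$ down or $n=3$. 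Iterating and invoking the inductive hypothesis, the only case left is a linear space $W$ of singular $3\times3$ matrices admitting no compression (no common kernel or cokernel, not of form (1)). Here I finish by computation: with $\Phi=\left(\begin{smallmatrix}I_{2}&0\\0&0\end{smallmatrix}\right)$, every $\Psi=\left(\begin{smallmatrix}A&b\\c&0\end{smallmatrix}\right)\in W$ satisfies $\det\Psi=-c\,\operatorname{adj}(A)\,b=0$, and expanding $\det(\Phi+t\Psi)=0$ — using that the $2\times2$ adjugate is linear, so $\operatorname{adj}(I+tA)=I+t\,\operatorname{adj}(A)$ — gives in addition $cb=0$ for every member. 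These identities are bilinear and alternating in the members of $W$, and together with incompressibility they pin $W$ down, after one last change of basis, to $\frakso(\boldR^{3})$.

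The hard part is this last stretch: checking that the reductions in situation (III) really exhaust the partially compressed intermediate configurations, and then extracting from the determinantal identities the simultaneous skew-symmetry that characterizes the incompressible $3\times3$ case. That is exactly where one must use the full, global force of the bounded-rank hypothesis rather than only its first-order (perturbative) consequences, and where bookkeeping of the successive normalizations is delicate.
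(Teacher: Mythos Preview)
The paper does not prove this theorem: it is quoted from Atkinson~\cite{atkinson-83} (with a pointer to the more general treatment in Eisenbud--Harris~\cite{eisenbud-harris-88}) and used as a black box. So there is no ``paper's own proof'' to compare against. What the paper \emph{does} prove is Theorem~\ref{rank two thm}, the classification of H-like algebras of $J$-rank two, and it expressly chooses \emph{not} to derive that from Theorem~\ref{rank 2 classification} but instead gives a self-contained metric argument via Lemma~\ref{E(Z)} and Theorem~\ref{classify J-rank 2}.

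Your sketch follows the standard compression-space approach in the spirit of Atkinson and Eisenbud--Harris: normalize a rank-two member, use a Schur-complement/perturbation identity to kill the $D$-block, then dichotomize on whether the off-diagonal blocks $B$ and $C$ can attain rank two, and finally isolate the incompressible $3\times 3$ case. That overall architecture is correct. Two places deserve more care. First, in situation~(III) the reductions you describe (common kernel line for $\{C(\Psi)\}$ versus common image line) need to be carried out symmetrically and iterated carefully; as you yourself note, the bookkeeping that shows every partially compressed configuration eventually lands in form~(1) or drops to $3\times 3$ is where the argument is most delicate, and your paragraph does not yet discharge it. Second, in the terminal $3\times 3$ step the identities $cb=0$ and $c\,\operatorname{adj}(A)\,b=0$ for all $\Psi\in W$ are correct, but the claim that they are ``bilinear and alternating in the members of $W$'' and that this, plus incompressibility, forces $W$ to be $\frakso(\boldR^3)$ up to equivalence is asserted rather than shown; one still has to polarize those identities across pairs of elements of $W$ and then produce the explicit change of basis. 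None of this is wrong in spirit, but the proposal is a roadmap rather than a proof, and the paper offers nothing to compare it to.
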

Even a general classification of skew-symmetric matrices
of rank $4$ analogous to the classification in Theorem~\ref{rank 2 classification}
remains unknown.

In Definition~\ref{tensor product def}, we define a class of metric
Lie algebras by taking the tensor product of a $J$-map for a metric
nilpotent Lie algebra with another linear map, and we show in
Proposition~\ref{tensor product} that if the original metric nilpotent Lie algebra is H-like and the
linear map is symmetric, then  the resulting Lie algebra is
H-like.   We show in Proposition~\ref{J unitary} that
if $(\frakn,Q)$ is H-like, then the J map is unitary.  In
Propositions~\ref{subspace sum},~\ref{submersion} and~\ref{central sum prop} we give new methods for constructing H-like Lie algebras
using subspaces of cones over conjugacy classes; these correspond to
families of block diagonal matrices, Riemannian submersions with
fibers in the center and central sums.

We classify H-like Lie algebras such that $\rank(J_Z) \le 2$
for all $Z \in \frakn$:
\begin{thm}\label{rank two thm} 
 Suppose that $(\frakn,Q)$ is an H-like metric nilpotent Lie algebra
and $\rank(J_Z) \le 2$ for all $Z \in \frakn$. Then $(\frakn,Q)$ is
homothetically isomorphic to one of the following:
\begin{enumerate}
\item{$(\frakn,Q)  = (\frakf_{3,2},Q)$ as in Example~\ref{free-3-2},}
\item{an almost abelian metric Lie algebra  as in Example~\ref{star algebra}.}
\end{enumerate}
\end{thm}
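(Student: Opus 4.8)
The plan is to reduce the statement to a pure linear-algebra problem via Theorem~\ref{convex thm}, then run a short geometric case analysis on subspaces of rank-$\le 2$ skew-symmetric matrices, using Theorem~\ref{rank 2 classification} as a guide.

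First I would pin down the $J$-rank. Since $(\frakn,Q)$ is H-like it has constant $J$-spectrum, hence constant $J$-rank $r$, and $r$ is even because each $J_Z$ is skew-symmetric. If $r=0$ then $J\equiv 0$, so $[\frakn,\frakn]=0$ and $\frakn$ is an abelian factor of itself, contradicting Definition~\ref{defn H-like}(3); hence $r=2$. Thus the spectrum $S$ has the form $\{bi,-bi,0,\dots,0\}$ with $b>0$, and $\boldR\calC_S$ is exactly the set of skew-symmetric $q\times q$ matrices of rank $\le 2$. By Theorem~\ref{convex thm}(2), $W:=J(\frakz)$ is a $p$-dimensional subspace of $\boldR\calC_S$ and $(\frakn,Q)$ is homothetic to the standard two-step metric nilpotent Lie algebra defined by $W$; moreover every nonzero element of $W$ has rank exactly $2$. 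The absence of nontrivial abelian factors translates into $\bigcap_{A\in W}\ker A=0$. So it suffices to classify, up to the congruence action $W\mapsto gWg^{T}$ with $g\in GL_q(\boldR)$, the $p$-dimensional subspaces of rank-$\le 2$ skew-symmetric matrices having trivial common kernel.

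For the case analysis I would use the standard pencil fact that $\rank(A+B)\le 2$ forces $\operatorname{im}A\cap\operatorname{im}B\ne 0$ for any two nonzero $A,B\in W$ (otherwise $\ker(A+B)=\ker A\cap\ker B$ has codimension $4$). Then split into two cases. If all the planes $\operatorname{im}A$ ($0\ne A\in W$) share a common line $\ell$, choose an orthonormal basis with $\ell=\boldR e_q$; every nonzero $A\in W$ can then be written $A=e_q\wedge v_A$ with $v_A\in\boldR^{q-1}$, and $A\mapsto v_A$ extends to a linear isomorphism of $W$ onto a subspace $V\subseteq\boldR^{q-1}$. This is exactly the skew ``bordered'' normal form of Theorem~\ref{rank 2 classification}(1); since $\bigcap_{A\in W}\ker A=V^{\perp}\cap\boldR^{q-1}$, triviality of the common kernel forces $V=\boldR^{q-1}$, i.e.\ $p=q-1$, and one recovers precisely the almost abelian ``star'' algebra of Example~\ref{star algebra}. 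If no such common line exists, a short argument with three suitably chosen nonzero elements $A,B,D$ shows that $\operatorname{im}A+\operatorname{im}B+\operatorname{im}D$ is three-dimensional and that $\operatorname{im}C$ lies in this common $3$-space $U$ for every $C\in W$; hence $W$ lies inside the three-dimensional space of skew-symmetric matrices vanishing on $U^{\perp}$, which is isomorphic to $\frakso(\boldR^3)$. In a subspace of $\frakso(\boldR^3)$ of dimension $\le 2$ the images $\operatorname{im}A$ always share a common line (their intersection is the orthogonal complement of the image of that subspace under $\frakso(\boldR^3)\cong\boldR^3$), so the ``no common line'' hypothesis forces $\dim W=3$, whence $W$ is the full space of skew matrices supported on $U$; and $\bigcap_{A\in W}\ker A=U^{\perp}$, so triviality of the common kernel forces $U=\boldR^q$, i.e.\ $q=p=3$ and $W=\frakso(\boldR^3)$. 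The associated standard Lie algebra is the free two-step nilpotent Lie algebra $\frakf_{3,2}$ of Example~\ref{free-3-2}.

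The main obstacle is the gap between the $PAQ$-equivalence of Theorem~\ref{rank 2 classification} and the congruence $W\mapsto gWg^{T}$ that governs isomorphism of standard metric Lie algebras; one must verify that a skew-symmetric rank-$\le 2$ subspace realizing one of Atkinson's normal forms is in fact congruent to the corresponding skew normal form, which is exactly what the ``common line vs.\ no common line'' dichotomy establishes directly. Secondary points, routine but requiring care, are the precise translation of ``no abelian factor'' into ``trivial common kernel of $W$,'' the verification that $A\mapsto v_A$ is well defined and linear, the three-element argument locating the common $3$-space, and the check that the two resulting families are genuinely H-like and mutually non-homothetic (the first from Theorem~\ref{convex thm}(1), the second because congruence preserves the property that the images of the $J_Z$ share a common line, which holds for the star algebras but fails for $\frakf_{3,2}$).
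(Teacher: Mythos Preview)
Your approach is correct and is precisely the route the paper itself acknowledges but deliberately does not take: just before Theorem~\ref{rank two thm} the authors write that the result ``can be proved using Theorem~\ref{convex thm} and Theorem~\ref{rank 2 classification},'' and then opt instead for a ``self-contained proof which uses the language and properties of $J$-maps and uses the metric throughout.''  Your reduction via Theorem~\ref{convex thm} to a $p$-dimensional subspace $W\subseteq\frakso(\boldR^q)$ of rank-$\le 2$ matrices with trivial common kernel, followed by the dichotomy ``the images $\operatorname{im}A$ share a common line / they do not,'' is the linear-algebraic version of the paper's dichotomy ``$\cap_{Z\ne 0}E(Z)$ is one-dimensional / it is $\{0\}$,'' since $E(Z)=\operatorname{im}J_Z$ for a rank-two skew map.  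Where the paper invokes Proposition~\ref{J unitary} (unitarity of $J$) to get orthogonality of $J_{Z_1}X$ and $J_{Z_2}X$ and then runs an ad hoc contradiction at $p=4$ using a fourth basis vector $Z_4$, you instead show that in the no-common-line case all images lie in a fixed $3$-plane $U$, whence $W\subseteq\frakso(U)$ has dimension at most $3$; this is cleaner and bypasses the metric entirely.  Conversely, the paper's argument never needs to verify the translation ``no abelian factor $\Leftrightarrow$ trivial common kernel,'' nor to worry about the gap between $PAQ$-equivalence and congruence, because it works intrinsically with the given inner product.  The two proofs thus trade a little metric machinery for a little incidence geometry; both are short, and your version makes the connection to Theorem~\ref{rank 2 classification} explicit, while the paper's makes the result independent of it.
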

In the first theorem of~\cite{decoste-demeyer-mainkar}, the authors
classify two-step nilpotent Lie algebras defined by graphs which admit H-like
inner products and obtain the H-like Lie algebras listed in Theorem
\ref{rank two thm}.  All Lie algebras defined by a graph have
$\rank(J_Z) \le 2$ for all $Z \in \frakn$, so the classification
in~\cite{decoste-demeyer-mainkar} follows from Theorem~\ref{rank two
thm}.  In higher dimensions, there are continuous families of
nonhomothetic metric Lie algebras 
with $\rank(J_Z) \le 2$ for
all $Z \in \frakn$, while there are only countably many Lie algebras
defined by graphs. Hence,  the classification in Theorem~\ref{rank two thm} is
more general than that in~\cite{decoste-demeyer-mainkar}.

Theorem~\ref{rank two thm} can be proved using
Theorem~\ref{convex thm} and Theorem~\ref{rank 2 classification}.
Instead we provide a new, self-contained proof which uses the language
and properties of $J$-maps and uses the metric throughout the
argument.

The paper is organized as follows.  In Section~\ref{background} we
review properties of multisets and present some examples of H-like Lie algebras.  We discuss properties of H-like Lie algebras in Section~\ref{properties}. 
In Section~\ref{convex}, we prove Theorem~\ref{convex thm} and use it to 
prove Propositions~\ref{subspace sum}, \ref{submersion} and~\ref{central sum prop}. In Section~\ref{classify
rank two}, we prove  
Theorem~\ref{rank two thm}.

\section{Background and examples}\label{background}

We will denote the spectra of matrices as multisets with elements from
$\boldC$. Recall that a multiset from $\boldC$ is a subset of $\boldC$
with multiplicities.  For example, the set $\{i, -i, 0\}$ endowed with multiplicity function $m:
S \to \boldN$ given by $m(i) = m(-i) =2$, $m(0) =1$, and $m(z) =0$ for
all other $z \in \boldC$ is a multiset. We say that the sum of the multiplicities of
a multiset $S$ is its {\em size} and we denote the size of $S$ by
$|S|$. The sum $S_1 \uplus S_2$ of multisets $S_1$ and $S_2$ is the
multiset determined by summing the multiplicity functions for $S_1$
and $S_2$. For a multiset $S$ and nonzero scalar $k$ we define the
multiset $kS$ so that the multiplicity for $kz$ in $kS$ is always
equal to the multiplicity of $z$ in $S$.

Let $S$ be an admissible multiset of size $q$.  The Frobenius norm on
$\frakso(\boldR^q)$ is given by $\| A \|^2 = \trace(AA^T)$.  All
elements $A$ of $\frakso(\boldR^q)$ with spectrum $S$ have the same
Frobenius norm, so we define the norm of the multiset $S$ by
\begin{equation}\label{norm}  
N(S)  = \| A \| =   \sqrt{ \sum_{a \in S} |a|^2 \cdot  m(a)},   
\end{equation}
where $m(a)$ is the multiplicity of $a.$  
For example, the multiset $S = \{i, -i, 3i, -3i\}$ with 
$m(i) = m(-i) =2$ and $m(3i) =m(-3i) = 1$ has $N(S) = \sqrt{22}$.

\begin{example}\label{h-type} 
 Suppose that $(\frakn = \frakv \oplus \frakz,Q)$ is H-type.  If
$\|Z\| = 1$, then $J_Z$ is nondegenerate with eigenvalues in the
multiset $S$ defined over $\{i, -i\}$, where $i$ and $-i$ have multiplicity
$\frac{1}{2} \dim \frakv.$ Then $N(S) =  \sqrt{\dim \frakv}$.
\end{example}

Deformations of inner products on H-type metric Lie algebras may give metrics
on the same underlying Lie algebra that are H-like but not H-type.
This occurs  in the next example.   
\begin{example}\label{h5}
Let $\frakv \cong \boldR^4$ and $\frakz \cong \boldR$ with $Z$  
a basis vector for $\frakz$ and $\{X_1, Y_1,
X_2, Y_2\}$ a basis for $\frakv$. Let  $a$ and $b$ be nonzero real
numbers.   Define $J: \frakz \to \End(\frakv)$ by
  \[  J(Z) =
    \begin{bmatrix}
     0  & -a & 0 &0 \\
      a & 0& 0& 0\\
     0  &0 &0 & -b \\
      0 &0 & b& 0 
    \end{bmatrix},
\]
and let $(\frakn,Q)$ be the resulting metric nilpotent Lie algebra with
orthonormal basis $\{Z, X_1, Y_1, X_2, Y_2\}$. Clearly $(\frakn,Q)$
has constant $J$-spectrum equal to the multiset $\{ai, -ai, bi, -bi
\}$.  When $|a|=|b|=1$, we get the H-type inner product on the
five-dimensional Heisenberg Lie algebra and when $|a| \ne |b|$ and both are nonzero, we get
an H-like inner product that is not H-type.   If we allow exactly one of the
parameters to be zero, then $\frakn$ is isomorphic to $\frakh_3\oplus \boldR^2$, and $(\frakn,Q)$ has constant $J$-spectrum
but is not H-like.  
\end{example}

Gornet and Mast gave the following families of nonisometric H-like 
 Lie algebras of type $(2,4)$ (\cite{gornet-mast-00}).  
\begin{example}\label{gornet-mast-00} 
Let $(a,b) \in \boldR^2 \setminus \{(0,0)\}$,
 and assume that $(c,d) \in \{ \pm (-b,a), \pm (-a,b) \}$. 
 Define for orthonormal $Z_1, Z_2$ 
in $\boldR^2,$
\[  J(Z_1) =
  \begin{bmatrix}
   0  &a &0 & 0 \\
   -a & 0& 0&0 \\
  0  & 0 &0 & b\\
   0 &0 & -b & 0 \\
  \end{bmatrix}, \quad \text{and} \quad
J(Z_2) = 
\begin{bmatrix}
 0 & 0 & c & 0\\ 
 0 & 0 &0 & d \\ 
 -c &0 & 0& 0 \\ 
  0 &-d & 0 & 0 
\end{bmatrix}
.\]
The spectrum of $J(Z)$ for unit $Z$ in $\boldR^2$ 
is  $\{ ai, -ai, bi, -bi\}$. When $a=\pm b$, the resulting 
metric Lie algebra is H-type.   When $a$ and $b$ are both nonzero and 
$|a| \ne |b|,$
  it is H-like but not H-type.
\end{example}

\begin{remark}
  The previous two examples show that  H-like metric nilpotent Lie
  algebras are not necessarily soliton:   Soliton inner products on a fixed nilpotent Lie algebra are
  unique up to scaling, but there may be two nonhomothetic 
inner products on a fixed Lie
  algebra which are both H-like.
\end{remark}

In the next example we consider the free two-step nilpotent Lie algebra on three generators.
\begin{example}\label{free-3-2}
Let $\frakf_{3,2}$ be the free two-step nilpotent Lie algebra on three 
generators.   It has basis $\{E_1, E_2, E_3\} \cup \{F_1, F_2, F_3\},$
and the Lie bracket is determined by the relations
\[  [E_1, E_2] = F_1, \quad [E_2,E_3] = F_2, \quad \text{and} \quad
  [E_1, E_3] = F_3.\]
Let $Q_0$ be the inner product that makes the basis orthonormal.
Then $\frakv = \myspan \{ E_i \}_{i=1}^3$ and $\frakz = \myspan
 \{ F_i \}_{i=1}^3$.  With respect to the basis $\calB = \{E_1,
E_2, E_3\}$ of $\frakv$, the endomorphism $J_{a_1 F_1 + a_2 F_2 + a_3
F_3}$ is given by
\[    [J_{a_1 F_1 + a_2 F_2 + a_3 F_3} ]_\calB= \begin{bmatrix}  
0 & -a_1 & -a_3 \\
 a_1 & 0 & -a_2  \\
 a_3 & a_2 & 0 \\ 
\end{bmatrix}.\]
The square of this mapping has eigenvalues $-(a_1^2 + a_2^2 + a_3^2)$
and $0$, with multiplicities 2 and 1 respectively.  Hence
$(\frakf_{3,2},Q_0)$ has $J$-rank two and is H-like.  Because the $J_Z$
maps are always singular, it is not H-type.
\end{example}

The following family of metric Lie algebras was shown to be  H-like in
~\cite{decoste-demeyer-mainkar} (Example 6).
\begin{example}\label{star algebra}
Let $(\frakn^m,Q_0)$ be the metric nilpotent Lie algebra with
orthonormal basis $\{E_0, E_1, E_2, \ldots, E_m\} \cup \{F_1, \ldots,
F_m\}$ with Lie bracket determined by relations
\[ [E_0, E_k] = F_k \quad \text{for $k=1, \ldots, m$.}\] Note that
$\myspan ( \{ E_1, E_2, \ldots, E_m\} \cup \{F_1, \ldots, F_m\})$ is a
codimension one abelian ideal and that all bracket relations are
determined by $\ad_{E_0}$. Such algebras are called {\em almost
abelian.}  
If $Z = a_1 F_1 + \cdots + a_k F_k$,  the map $J_Z^2$ has eigenvalues $-(a_1^2 + \cdots + a_m^2)$ and $0$, with
multiplicities $2$ and $m-1 = \dim (\frakv )- 2$ respectively.
Therefore, if $\|Z\| = 1,$ the spectrum of $J_Z$ is $\{i,-i,0\}.$ Hence
$(\frakn^m,Q_0)$ has $J$-rank 2 and is H-like.
\end{example}

\begin{example}
Let $(\frakn = \frakv \oplus \frakz,
Q)$ be a two-step metric nilpotent Lie algebra so that the isometry group of $Q$
acts transitively on the unit sphere in $\frakz.$  

For any $Z \in \frakz$, $J_{\phi(Z)} = \phi \circ J_Z \circ \phi^{-1},$ so $J_Z$ and
$J_{\phi(Z)}$ have the same spectrum.  Hence $(\frakn, Q)$
is H-like.  
\end{example}

\section{Properties of H-like Lie algebras}\label{properties}

The next proposition describes how the property of having constant 
$J$-spectrum behaves under direct sums.  
\begin{prop}\label{sums}
Let $(\frakn_1,Q_1)$ and $(\frakn_2,Q_2)$ be metric nilpotent
 Lie algebras which are abelian or two-step.  
Define the metric Lie algebra $(\frakn_1
\oplus \frakn_2, Q)$ with $Q$ so that $Q|_{\frakn_1} = Q_1,
Q|_{\frakn_2} = Q_2$ and $\frakn_1 \perp \frakn_2$. 
Assume $\frakn_1
\oplus \frakn_2$ is non-abelian.
The following are
equivalent:
\begin{itemize}
\item The direct sum  $(\frakn_1 \oplus \frakn_2, Q)$ has constant $J$-spectrum.
\item One of 
$(\frakn_1,Q_1)$ or $(\frakn_2,Q_2)$ is abelian, and the other 
has constant $J$-spectrum. 
\end{itemize} 
\end{prop}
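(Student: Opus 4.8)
The plan is to first record how the $J$-map of an orthogonal direct sum decomposes into blocks, then exploit a single well-chosen unit vector in the center to force one of the summands to be abelian; the reverse implication is a short direct computation.

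\emph{Step 1 (block structure).} Since $[\frakn_1,\frakn_2]=0$, the commutator of $\frakn_1\oplus\frakn_2$ is $\frakz=\frakz_1\oplus\frakz_2$ with $\frakz_i=[\frakn_i,\frakn_i]$, and because $\frakn_1\perp\frakn_2$ the complement splits as $\frakv=\frakv_1\oplus\frakv_2$ with $\frakv_i=\frakz_i^\perp$ taken inside $\frakn_i$. For $Z=Z_1+Z_2$ with $Z_i\in\frakz_i$ and $X=X_1+X_2$, $Y=Y_1+Y_2$ with $X_i,Y_i\in\frakv_i$, expanding \eqref{def J} and using that cross brackets vanish and that $\frakn_1\perp\frakn_2$ gives $\la J_Z X,Y\ra = \la J^{(1)}_{Z_1}X_1,Y_1\ra + \la J^{(2)}_{Z_2}X_2,Y_2\ra$, so $J_Z = J^{(1)}_{Z_1}\oplus J^{(2)}_{Z_2}$ is block diagonal on $\frakv_1\oplus\frakv_2$, where $J^{(i)}$ is the $J$-map of $\frakn_i$. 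Hence $\Spec(J_Z)=\Spec(J^{(1)}_{Z_1})\uplus\Spec(J^{(2)}_{Z_2})$. I will also use that $J^{(i)}$ is injective on $\frakz_i=[\frakn_i,\frakn_i]$ (Lemma~\ref{p dim}), so $J^{(i)}_{Z_i}\ne 0$ whenever $Z_i\ne 0$.

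\emph{Step 2 (easy implication).} If, say, $\frakn_1$ is abelian, then $\frakz=\frakz_2$ and for unit $Z\in\frakz$, Step 1 gives $\Spec(J_Z)=\{0^{(\dim\frakn_1)}\}\uplus\Spec(J^{(2)}_Z)$. If moreover $(\frakn_2,Q_2)$ has constant $J$-spectrum $S_2$ (whence $\frakn_2$ is non-abelian and unit vectors in $\frakz$ exist), this equals $\{0^{(\dim\frakn_1)}\}\uplus S_2$ for every unit $Z$, so the direct sum has constant $J$-spectrum.

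\emph{Step 3 (hard implication).} Assume $(\frakn_1\oplus\frakn_2,Q)$ has constant $J$-spectrum $S$. Suppose for contradiction that neither summand is abelian; then $\frakz_1,\frakz_2\ne 0$, so choose unit $Z_i\in\frakz_i$ (using $\frakn_1\perp\frakn_2$, $Z_1\perp Z_2$ and $\|Z_1+Z_2\|=\sqrt2$). Put $n_i=\dim\frakv_i$, let $m_0$ be the multiplicity of $0$ in $S$, let $S^{\ne0}$ be $S$ with its zeros removed, and set $r=|S^{\ne0}|=n_1+n_2-m_0$. Since $Z_1$ acts as $0$ on $\frakv_2$, Step 1 applied to $Z_1$, and likewise to $Z_2$, yields $\Spec(J^{(1)}_{Z_1})=S^{\ne0}\uplus\{0^{(n_1-r)}\}$ and $\Spec(J^{(2)}_{Z_2})=S^{\ne0}\uplus\{0^{(n_2-r)}\}$, so in particular $\rank J^{(1)}_{Z_1}=r$. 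Now test the unit vector $Z=\tfrac{1}{\sqrt2}(Z_1+Z_2)$: Step 1 gives $\Spec(J_Z)=\tfrac1{\sqrt2}S^{\ne0}\uplus\tfrac1{\sqrt2}S^{\ne0}\uplus\{0^{(n_1+n_2-2r)}\}$, which contains $0$ with multiplicity $n_1+n_2-2r$, whereas constancy forces $\Spec(J_Z)=S$, with $0$ of multiplicity $m_0=n_1+n_2-r$. Hence $r=0$, i.e.\ $J^{(1)}_{Z_1}=0$, contradicting $Z_1\ne0$. So some $\frakn_i$, say $\frakn_1$, is abelian; as the sum is non-abelian, $\frakn_2$ is not. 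Then $\frakz=\frakz_2\ne0$, and for every unit $Z\in\frakz_2$, Step 1 shows $\Spec(J^{(2)}_Z)$ equals $S$ with $\dim\frakn_1$ zeros removed, independent of $Z$; thus $(\frakn_2,Q_2)$ has constant $J$-spectrum.

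The only non-routine point is the passage to the diagonal test vector $\tfrac1{\sqrt2}(Z_1+Z_2)$ together with the observation that comparing the multiplicity of the eigenvalue $0$ across $Z_1$, $Z_2$ and $\tfrac1{\sqrt2}(Z_1+Z_2)$ forces $r=0$; everything else is bookkeeping with multisets plus the injectivity of $J$ on the commutator.
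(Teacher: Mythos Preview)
Your proof is correct and follows the same strategy as the paper's: the block decomposition $J_Z=J^{(1)}_{Z_1}\oplus J^{(2)}_{Z_2}$ together with a rank (equivalently, multiplicity-of-$0$) comparison at $Z_1$, $Z_2$, and their sum forces one summand to have $J\equiv 0$, contradicting injectivity of $J$ on the commutator. The paper's version is a bit slicker---it notes directly that constant $J$-spectrum makes $\rank J_Z$ constant for all nonzero $Z$ and that $\rank J_{Z_1+Z_2}=\rank J_{Z_1}+\rank J_{Z_2}$, so one rank vanishes, avoiding your normalization to unit vectors and the detailed bookkeeping with $S^{\ne 0}$ and the $\tfrac{1}{\sqrt 2}$ scaling.
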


\begin{proof}
Write $\frakn_1 = \frakv_1 \oplus \frakz_1$ and $\frakn_2 = \frakv_2
\oplus \frakz_2$. 

Suppose that $(\frakn_1 \oplus \frakn_2, Q)$ has constant
$J$-spectrum.  If  neither $\frakn_1$ nor $\frakn_2$ is abelian, then 
$\frakz_1$ and $\frakz_2$ are nontrivial, so there exist nonzero 
$Z_1 \in \frakz_1 = [\frakv_1,\frakv_1]$ and nonzero $Z_2 \in \frakz_2
= [\frakv_2,\frakv_2].$  By the constant spectrum hypothesis, the
ranks of $J(Z_1 + Z_2)$, $J(Z_1)$, and $J(Z_2)$ are the same.  
But because $\frakv_1
\cap \frakv_2 = \{0\}$ in $\frakn_1 \oplus \frakn_2$, the rank of
$J(Z_1 + Z_2)$ is the sum of the ranks of $J(Z_1)$ and $J(Z_2)$. Therefore one of
them has rank zero, a contradiction to $Z_1$ and $Z_2$ being in the commutator.
If $\frakn_2$ is abelian, then clearly $\frakn_1$ must have constant spectrum.  

For the converse,
suppose that $(\frakn_1,Q_1)$ is two-step with constant spectrum $S$ and $\frakn_2$ is abelian.  Then
$(\frakn_1,Q_1)$ has nontrivial commutator and 
$J^\frakn=J^{\frakn_1}\oplus 0^{\frakn_2}$ 
where $0^{\frakn_2}$ is the zero map on $\frakv_2$.  The spectrum of
$J^\frakn$ is $S \uplus T,$ where $T$ is $\{0\}$ with multiplicity $\dim(\frakv_2).$
\end{proof}

A simple way to build new H-like Lie algebras from old is by taking
the tensor product of a symmetric map with the $J$-map for an H-like
Lie algebra.   Before proving this we need to define the
metric nilpotent Lie algebras corresponding to $J$-maps 
that are tensor products.

\begin{defn}
\label{tensor product def}
  Suppose that $(\frakn,Q)$ is a two-step metric nilpotent Lie algebra
with $\frakn = \frakv \oplus \frakz$. Let $J : \frakz
\to \End(\frakv)$ be the $J$-map for $(\frakn,Q)$. Let $S: \boldR^m
\to \boldR^m$, where $m >1$, be a linear map which is symmetric with
respect to the standard inner product $\la \cdot , \cdot \ra_{\text{standard}}$ on $\boldR^m$. 

 Define the inner product $Q_S$ on $\frakz \oplus (\frakv \otimes \boldR^m)$
so that $Q_S|_{\frakz} = Q|_{\frakz},$  $\frakz \perp (\frakv \otimes
\boldR^m),$ and  the restriction of $Q_S$ to $\frakv \otimes \boldR^m$
is
determined by
\[ \la X_1 \otimes Y_1, X_2 \otimes Y_2 \ra = \la  X_1, X_2 \ra_Q
  \cdot \la Y_1, Y_2 \ra_{\text{standard}}, \]
where $X_1,X_2 \in \frakv$ and $Y_1, Y_2 \in \boldR^m.$   
Define the  map
$J^S: \frakz \to \End(\frakv \otimes \boldR^m)$ by $J^S(Z) = J_Z
\otimes S$.
Let $\frakn \otimes S = \frakz \oplus (\frakv \otimes \boldR^m)$.
Make $\frakn \otimes S$ into a  Lie algebra by defining the 
bracket using the $J$ map through
Equation \eqref{def J}. 
\end{defn}

\begin{example}
Let $(\frakh_3,Q)$ be the Heisenberg metric Lie algebra with
orthonormal basis $\{ X,Y,Z \}$ where $[X,Y] = Z$. Take orthonormal
basis $\{E_1,E_2\}$ for $\boldR^2$ and let $S:
\boldR^2 \to \boldR^2$ be the map with $S(E_1) = aE_1$ and $S(E_2)
= bE_2$, where $a$ and $b$ are nonzero.  Then the map $J^S: \frakz
\to \End(\myspan\{X,Y\} \otimes \boldR^2)$ as in Definition \ref{tensor product def}
sends $Z$ to $J_Z \otimes S$. The resulting Lie algebra $\frakn
\otimes S$ as in Definition \ref{tensor product def} is isometrically isomorphic to the one in Example~\ref{h5}.
\end{example}

\begin{prop}\label{tensor product}
Suppose that $(\frakn= \frakv \oplus \frakz,Q)$ is a two-step metric nilpotent
Lie algebra. Let $S: \boldR^m \to \boldR^m$ be a nonsingular linear map which is
symmetric with respect to the standard inner product on $\boldR^m$, where $m>1$.

Then, as a multiset, the spectrum of $J^S(Z)$ is the multiset consisting
of all products $\lambda \mu$, where $\lambda \in \Spec(J_Z)$ and $\mu
\in \Spec(S)$, and the multiplicity of $\gamma \in \Spec(J^S(Z))$ is the sum of products of multiplicities, $m(\lambda_i)m(\mu_i)$ where $\lambda_i \mu_i=\gamma$.
Hence, if $(\frakn,Q)$ is H-like, then $\frakn \otimes S$ is H-like.
\end{prop}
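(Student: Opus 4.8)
The plan is to reduce everything to the elementary fact that the spectrum of a tensor product of (complex-)diagonalizable operators is the multiset of products of their eigenvalues, and then feed this into the definition of H-like.

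First I would check that $J^S(Z) = J_Z \otimes S$ really is the $J$-map of $(\frakn \otimes S, Q_S)$: by Definition~\ref{tensor product def} the bracket on $\frakn \otimes S$ is built from $Z \mapsto J_Z \otimes S$ through \eqref{def J}, so one only needs that $J_Z \otimes S$ is skew-symmetric for $Q_S|_{\frakv \otimes \boldR^m}$, which holds because that restriction is the tensor-product inner product and $(A \otimes B)^T = A^T \otimes B^T$, with $J_Z$ skew and $S$ symmetric. Since $S$ is nonsingular, $J^S$ is injective exactly when $J$ is, which it is because $\frakz = [\frakn,\frakn]$; hence $\frakz$ is the commutator of $\frakn \otimes S$ and $\frakv \otimes \boldR^m$ is its $Q_S$-orthocomplement, so the conventions of Definition~\ref{defn H-like} apply to $\frakn \otimes S$.

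Next, for a fixed $Z$, I would complexify and diagonalize: $J_Z$ is real skew-symmetric, hence diagonalizable over $\boldC$ with eigenvalue multiset $\Spec(J_Z)$, and $S$ is real symmetric, hence diagonalizable with eigenvalue multiset $\Spec(S)$. Using the canonical isomorphism $(\frakv \otimes_{\boldR} \boldR^m) \otimes_{\boldR} \boldC \cong (\frakv \otimes_{\boldR} \boldC) \otimes_{\boldC} (\boldR^m \otimes_{\boldR} \boldC)$ and tensoring the two eigenbases, one obtains an eigenbasis for $J_Z \otimes S$ in which $v \otimes w$ has eigenvalue equal to the product of the eigenvalue of $v$ and that of $w$. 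This immediately yields the asserted description of $\Spec(J^S(Z))$ as the multiset of products $\lambda\mu$ with $\lambda \in \Spec(J_Z)$, $\mu \in \Spec(S)$, and the multiplicity of a value $\gamma$ equal to $\sum_{\lambda\mu = \gamma} m(\lambda)\,m(\mu)$.

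For the H-like conclusion, assume $(\frakn, Q)$ is H-like, so it has constant $J$-spectrum — a fixed multiset, say $\Sigma$, for every unit $Z \in \frakz$ — and no nontrivial abelian factor. Since $Q_S|_\frakz = Q|_\frakz$, the unit sphere of $\frakz$ is unchanged, and by the previous step $\Spec(J^S(Z))$ is the product multiset of $\Sigma$ with $\Spec(S)$ for every unit $Z$, independent of $Z$; thus $\frakn \otimes S$ has constant $J$-spectrum. Finally I would rule out abelian factors: under the convention $\frakz = [\frakn\otimes S, \frakn\otimes S]$, a nontrivial abelian factor is equivalent to $\bigcap_{Z \in \frakz}\ker J^S(Z) \neq \{0\}$, but nonsingularity of $S$ gives $\ker(J_Z \otimes S) = (\ker J_Z) \otimes \boldR^m$, so this intersection equals $\big(\bigcap_Z \ker J_Z\big)\otimes \boldR^m = \{0\}$ because $(\frakn,Q)$ has no abelian factor. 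Hence $\frakn \otimes S$ is H-like. The only non-formal points I would write out carefully are the identification of the complexified tensor product in the spectrum step and the computation $\ker(J_Z \otimes S) = (\ker J_Z)\otimes\boldR^m$ — the real place the nonsingularity of $S$ is used, since without it one could create new central directions and destroy the H-like property — together with the standard equivalence ``nontrivial abelian factor $\iff \bigcap_Z \ker J_Z \neq \{0\}$'', valid because we take $\frakz$ to be the commutator.
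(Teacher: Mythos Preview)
Your proof is correct and follows essentially the same approach as the paper: both invoke the standard fact that the spectrum of $J_Z \otimes S$ is the product multiset of $\Spec(J_Z)$ and $\Spec(S)$, after checking that $\frakz$ is the commutator of $\frakn \otimes S$. The paper verifies the commutator claim by an explicit bracket computation $\langle Z,[X\otimes U,\,Y\otimes U]\rangle=\lambda\|Z\|^2\neq 0$ with $U$ an eigenvector of $S$, whereas you argue via injectivity of $Z\mapsto J_Z\otimes S$; your write-up is in fact more complete, since you also explicitly rule out abelian factors through $\ker(J_Z\otimes S)=(\ker J_Z)\otimes\boldR^m$, a point the paper's proof leaves implicit.
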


\begin{proof}
 First we show that $[\frakn \otimes S, \frakn \otimes S] = \frakz$.
From the definition of $\frakn \otimes S$, the commutator for $\frakn
\otimes S$ is contained in the domain of $J^S$, that is $\frakz$. Fix
unit $Z \in \frakz \subseteq \frakn$. Then there exist orthogonal $X$ and $Y$
in $\frakv$ so that $[X,Y]=Z$ in $\frakn$. Choose unit $U$ to be an eigenvector of
$S$ with (nonzero) eigenvalue $\lambda$. Then
\begin{align*}
 \la Z, [X \otimes U, Y \otimes U] \ra 
&= \la J_Z^S (X \otimes U ), Y \otimes U  \ra  \\
& = \la J_Z X \otimes S(U) , Y \otimes U \ra \\
&=  \la J_Z X \otimes \lambda U , Y \otimes U \ra \\
 &= \la Z , Z \ra \cdot \la \lambda U , U \ra \\  &= \lambda \|Z \|^2 \ne 0.\end{align*}
 Hence $Z \in   [\frakn \otimes S, \frakn \otimes S]$.  Therefore  
 $\frakz \subseteq [\frakn \otimes S, \frakn \otimes S]$.

Because the eigenvalues of the tensor product of maps is the set of
products of eigenvalues of each, it is immediate that if $(\frakn,Q)$
has constant $J$-spectrum, then $\frakn \otimes S$ has constant
$J$-spectrum.
\end{proof}

\begin{example}
The generalized Heisenberg groups defined by Goze and Haraguchi 
in~\cite{goze-haraguchi-82} arise from tensor products.  Their Lie
algebras, which we will call {\em generalized Heisenberg Lie
algebras}, are of the form $\frakm^{r} \otimes I_p$, where $\frakm^{r}$ is
the Lie algebra of dimension $2r+1$ from Example~\ref{star algebra}
and $I_p$ is the $p \times p$ identity matrix.  Taking $r=1$ yields the
$(2p+1)$-dimensional Heisenberg algebra, and taking $p=1$ yields
$\frakm^{r}$. Because the Lie algebras $\frakm^{r}$ support H-like
inner products, Proposition~\ref{tensor product} implies that the
generalized Heisenberg Lie algebras of Goze and Haraguchi admit H-like inner products.
\end{example}

We show that if $(\frakn,Q)$ is an H-like metric Lie algebra,
then the map $J$ for $(\frakn,Q)$ is unitary with respect to a
rescaled Frobenius norm on the image space.

\begin{prop}\label{J unitary} 
 Suppose that $(\frakn = \frakv \oplus \frakz,Q)$ is an H-like Lie algebra with spectrum $S$. Endow $\frakz$ with the inner
product that is the restriction of the inner product $Q$, and give  $\End(\frakv)$ the
rescaled  Frobenius inner product
\[ \la A, B \ra_{\End(\frakv)} = \textstyle \frac{1}{N(S)^2} \trace A
  B^T,\]
 where $N(S)$ is  as in Equation \eqref{norm},  and the
trace and transpose are taken with respect to the restriction of the
inner product $Q$ to $\frakv$. Then
\begin{equation}\label{unitary}
  \la J_Y, J_Z \ra_{\End(\frakv)}  = \la  Y, Z \ra_Q \end{equation}
for all $Y, Z \in \frakz$. 
\end{prop}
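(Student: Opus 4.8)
The plan is to verify the polarization identity \eqref{unitary} by first checking it on the diagonal, $Y = Z$, and then invoking bilinearity. Both sides of \eqref{unitary} are symmetric bilinear forms on $\frakz$, so it suffices to show $\la J_Z, J_Z \ra_{\End(\frakv)} = \la Z, Z \ra_Q$ for every $Z \in \frakz$; the general case follows by polarization since two symmetric bilinear forms agreeing on the diagonal agree everywhere. For $Z = 0$ both sides vanish, so assume $Z \ne 0$.

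First I would reduce to the unit case. Write $Z = \|Z\| Z_0$ with $Z_0$ a unit vector in $\frakz$. Since $J$ is linear, $J_Z = \|Z\| J_{Z_0}$, so $\la J_Z, J_Z \ra_{\End(\frakv)} = \|Z\|^2 \la J_{Z_0}, J_{Z_0} \ra_{\End(\frakv)}$, and the claim for $Z$ reduces to the claim for $Z_0$. Thus it remains to show $\la J_{Z_0}, J_{Z_0} \ra_{\End(\frakv)} = 1$ for unit $Z_0$, i.e., $\frac{1}{N(S)^2} \trace (J_{Z_0} J_{Z_0}^T) = 1$. Now I use the hypothesis that $(\frakn, Q)$ is H-like with spectrum $S$: for unit $Z_0$, the map $J_{Z_0}$ is skew-symmetric with eigenvalue multiset exactly $S$. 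Since $J_{Z_0}$ is skew-symmetric, $J_{Z_0}^T = -J_{Z_0}$, so $\trace(J_{Z_0} J_{Z_0}^T) = -\trace(J_{Z_0}^2)$, which equals $\sum_{a \in S} |a|^2 m(a) = N(S)^2$ by the very definition of $N(S)$ in Equation \eqref{norm} (the eigenvalues of $J_{Z_0}^2$ are $a^2 = -|a|^2$ for $a \in S$, since the elements of $S$ are purely imaginary). Hence $\la J_{Z_0}, J_{Z_0} \ra_{\End(\frakv)} = N(S)^2 / N(S)^2 = 1 = \la Z_0, Z_0 \ra_Q$, as desired.

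The only genuinely delicate point is bookkeeping with the Frobenius norm when $\frakv$ is not presented with an orthonormal basis: the trace and transpose in the definition of $\la \cdot, \cdot \ra_{\End(\frakv)}$ are taken with respect to $Q|_\frakv$, and one must be careful that "$J_{Z_0}$ has eigenvalue multiset $S$" together with $Q$-skew-symmetry really does force $\trace(J_{Z_0} J_{Z_0}^T) = N(S)^2$. This is fine because for a $Q$-orthonormal basis of $\frakv$ the matrix of $J_{Z_0}$ is genuinely skew-symmetric in the ordinary sense, and the Frobenius norm computed in any orthonormal basis is basis-independent and equals $\sqrt{-\trace J_{Z_0}^2}$; this is exactly the computation already made in the paragraph before Example \ref{h-type} establishing that $N(S)$ is well-defined. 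I do not anticipate any serious obstacle beyond making this identification explicit.
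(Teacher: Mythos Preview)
Your proposal is correct and follows essentially the same route as the paper: both reduce to showing $\la J_Z, J_Z \ra_{\End(\frakv)} = 1$ for unit $Z$, compute $\trace(J_Z J_Z^T) = -\trace J_Z^2 = N(S)^2$ via skew-symmetry and the constant-spectrum hypothesis, and then pass to the general bilinear identity. The paper compresses your polarization step into the single sentence ``Therefore $J$ maps the unit sphere in $\frakz$ into the unit sphere in $\End(\frakv)$ so it is unitary,'' but the underlying argument is the same.
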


\begin{proof}   Let $Z \in \frakz$ be a unit vector. 
Because $(\frakn,Q)$ is H-like,
$\trace J_Z^2 = - N(S)^2$. This and skew-symmetry of $J_Z$ give
\[
 \la J_Z, J_Z \ra_{\End(\frakv)} =  \textstyle \frac{1}{N(S)^2}  \trace
 J_Z J_Z^T  = \textstyle-\frac{1}{N(S)^2} \trace J_Z^2  = 1 .\]
Therefore $J$ maps the unit sphere in $\frakz$ into the unit sphere in
$\End(\frakv)$ so it is unitary.   
\end{proof}
As a consequence, with respect to the unscaled Frobenius norm $\|
\cdot \|_{\text{Frobenius}},$ if $(\frakn,Q)$
is H-like, and $\|Z\| = \|W\|,$ then $\| J_Z\|_{\text{Frobenius}} = \| J_W\|_{\text{Frobenius}}.$

The next proposition describes how the spectrum of a vector in
a subspace of the cone over a conjugacy class depends on its norm. 
 
\begin{lemma}\label{sphere spectrum}
  Let $S$ be an admissible multiset of size $q$.  Let $\boldR \calC_S$
be the cone over the conjugacy class for $S$ in $\frakso(\boldR^q)$.
Then any $A \in \boldR \calC_S$ has
spectrum $\frac{\|A\|_{\text{Frobenius}}}{N(S)} S$, where $\| A \|_{\text{Frobenius}}^2
= \trace (AA^T).$
\end{lemma}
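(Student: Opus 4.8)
The plan is to unwind the definition of the cone $\boldR\calC_S$ and reduce the statement to a one-line scaling computation together with the admissibility hypothesis on $S$. First I would write an arbitrary $A \in \boldR\calC_S$ as $A = rB$ for some $r \in \boldR$ and some $B \in \calC_S$, so that $B \in \frakso(\boldR^q)$ has spectrum exactly $S$. Scaling a linear map by $r$ multiplies every eigenvalue by $r$ and leaves multiplicities unchanged, so in the multiset notation of Section~\ref{background} the spectrum of $A$ is $rS$.

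Next I would note that because $S$ is admissible one has $rS = |r|\,S$ as multisets: the nonzero elements of $S$ occur in conjugate pairs $bi,\,-bi$ with equal multiplicity, and multiplication by a negative scalar merely interchanges $bi$ with $-bi$ (and fixes $0$), so no multiplicity is altered. Hence the spectrum of $A$ equals $|r|\,S$. It then remains only to identify $|r|$ with $\|A\|_{\text{Frobenius}}/N(S)$: since $B \in \calC_S$ has spectrum $S$, Equation~\eqref{norm} gives $\|B\|_{\text{Frobenius}} = N(S)$, and therefore $\|A\|_{\text{Frobenius}} = |r|\,\|B\|_{\text{Frobenius}} = |r|\,N(S)$. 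As $S$ indexes a conjugacy class of nonzero skew-symmetric maps, $N(S) \neq 0$, so $|r| = \|A\|_{\text{Frobenius}}/N(S)$, and the spectrum of $A$ is $\frac{\|A\|_{\text{Frobenius}}}{N(S)}\,S$, as claimed. The only degenerate case is $A = 0$ (that is, $r = 0$), where both the asserted spectrum $0\cdot S$ and the actual spectrum describe the zero matrix of size $q$.

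There is essentially no real obstacle in this argument; the single point that needs care is the sign issue, i.e. invoking admissibility of $S$ to pass from $rS$ to $|r|\,S$. Without that observation the formula as stated — whose coefficient $\|A\|_{\text{Frobenius}}/N(S)$ is nonnegative — would fail for negative $r$, so this is the step I would be sure to spell out explicitly.
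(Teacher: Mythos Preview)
Your proof is correct and follows essentially the same approach as the paper's: write $A$ as a scalar multiple of an element with spectrum $S$, then identify the scalar via the Frobenius norm. The only cosmetic difference is that the paper rescales $A$ to $\lambda A$ with spectrum $S$ and asserts ``as $A$ is skew-symmetric, we may assume that $\lambda > 0$,'' whereas you make the underlying reason explicit by invoking admissibility to get $rS = |r|S$; these are the same observation.
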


\begin{proof} 
Clearly the statement holds when $A=0$. Let $A \in \boldR \calC_S$ be
nonzero.  Because $A$ is in $\boldR \calC_S,$ its spectrum is a
multiple of $S$, and we can rescale $A$ by $\lambda $ so that $\lambda A$ has
spectrum $S$. As $A$ is skew-symmetric, we may assume that $\lambda >0$.

Choose $B \in \boldR \calC_S$ with spectrum $S$. Then $\|B\| = N(S)$,
where $N(S)$ is as in Equation~\eqref{norm}.  Because they have the same
spectrum, $\lambda A$ and $B$ are conjugate.  Conjugation preserves
the Frobenius norm, so $\lambda \|A\| = \| B\|$.
But $\lambda \Spec(A) = \Spec(B) = S$. Hence
\[ \Spec(A) = \frac{1}{\lambda} S = \frac{\|A\| }{ \| B\|} S = \frac{\|A\| }{ N(S)} S. \]
\end{proof}
We will use the following simple yet crucial lemma in the proof of the
main theorem.  It lies behind the correspondence between nilpotent Lie
algebras of type $(p,q)$ and $p$-dimensional subspaces of
$\frakso(\boldR^q)$ in \cite{eberlein-moduli}.
\begin{lemma}\label{p dim}
 Suppose that $(\frakn = \frakv \oplus \frakz,Q)$ is a two-step
 metric nilpotent Lie algebra of type $(p,q).$ 
Then $J(\frakz)$ is a $p$-dimensional subspace of $\End(\frakv).$
\end{lemma}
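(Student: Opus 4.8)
The plan is to prove that the linear map $J \colon \frakz \to \End(\frakv)$ of \eqref{def J} is injective; since $\dim \frakz = p$ by the definition of type $(p,q)$, injectivity immediately yields that $J(\frakz)$ is a $p$-dimensional subspace of $\End(\frakv)$.

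First I would note that $J$ is linear in its argument (as already observed just after \eqref{def J}), so $J(\frakz)$ is a subspace of $\End(\frakv)$ of dimension at most $p$, and it remains only to check $\ker J = \{0\}$. Suppose $Z \in \frakz$ satisfies $J(Z) = 0$. Then by \eqref{def J} we have $\la Z, [X,Y] \ra = \la J(Z) X, Y \ra = 0$ for all $X, Y \in \frakv$, so $Z$ is $Q$-orthogonal to $[\frakv,\frakv]$.

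The one step that needs a word of justification is the identification $[\frakv,\frakv] = \frakz$. Here I would use that in this paper $\frakz$ denotes the commutator $[\frakn,\frakn]$ (rather than the center), together with the two-step hypothesis: since $\frakz = [\frakn,\frakn]$ and $\frakn$ is two-step, $[\frakz,\frakn] = 0$, whence $[\frakv,\frakz] = [\frakz,\frakz] = \{0\}$ and expanding the bracket bilinearly gives $\frakz = [\frakn,\frakn] = [\frakv,\frakv]$. Consequently $Z$ is orthogonal to all of $\frakz$; since $Z \in \frakz$ and $Q$ is positive definite, $Z = 0$. This establishes injectivity of $J$, and hence the lemma. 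There is no real obstacle here — the only subtlety is the above identification, which is precisely where the paper's convention of taking $\frakz$ to be the commutator (so that abelian factors sit inside $\frakv$, not $\frakz$) is used; one may also remark that each $J_Z$ is skew-symmetric, so in fact $J(\frakz)$ lies in $\frakso(\frakv)$.
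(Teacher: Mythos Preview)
Your proof is correct and follows essentially the same approach as the paper: both arguments establish that $J$ is injective by showing that $J_Z \equiv 0$ forces $Z = 0$. The paper merely asserts this last implication (``for $Z \in [\frakn,\frakn]$, $J_Z \equiv 0$ if and only if $Z=0$''), whereas you supply the justification via $[\frakv,\frakv]=\frakz$ and positive definiteness of $Q$; otherwise the two proofs are the same.
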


\begin{proof}
Let $\{Z_1, \ldots, Z_p\}$ be a basis of $\frakz.$ We want to show
that 
$J(Z_1), \ldots, J(Z_p)$ are independent.  Suppose that there are real numbers
$a_1, \ldots, a_p$ so that $\sum_{i=1}^p a_i J(Z_i) = 0.$  But then  
$J(\sum_{i=1}^p a_i Z_i) = 0.$  Hence $J(Z) \equiv 0$ for $Z =
\sum_{i=1}^p a_i Z_i.$  But for $Z \in [\frakn, \frakn],$  $J_Z
\equiv 0$ if and only if $Z=0.$  Because $\{Z_1, \ldots, Z_p\}$  is
independent,   $a_i  = 0$ for all $i.$
\end{proof}

\begin{remark}
  It can be shown that if $(\frakn,Q)$ is H-like, then the
  restriction of the Ricci endomorphism to $\frakz$ is a constant
  times the identity (see \cite[Lemma 1]{payne10}).  
\end{remark}

\section{The Main Theorem and some of its consequences}\label{convex}
 To prove Theorem~\ref{convex thm}, we show that the bijection 
between two-step metric nilpotent Lie algebras 
$(\frakn,Q)$ of type $(p,q)$ and $p$-dimensional subspaces of 
$\frakso(\boldR^q)$ endowed with the natural inner product  is a bijection between algebras with spectrum a multiple of $S$
and $p$-dimensional subspaces of the cone $\boldR \calC_S$ in
$\frakso(\boldR^q)$.

\begin{proof}[Proof of Theorem~\ref{convex thm}]
Fix $(p,q)$. Let $S$ be an admissible multiset of size $q \geq 2$.
Let $W$ be a $p$-dimensional subspace of $\boldR \calC_S \subseteq
\frakso(\boldR^q)$ and let $(\frakn,Q)$ be the standard two-step
metric nilpotent Lie algebra defined by $W$ as in Definition
\ref{standard}. 
Choose $Z \in \frakz$ with $\| Z\| = 1.$  Then $J(Z) \in W \subseteq \boldR
\calC_S,$ so its spectrum is a scalar multiple of $S.$

Suppose that $(\frakn = \frakv \oplus \frakz, Q)$ is a two-step metric nilpotent Lie algebra of type
$(p,q)$ with 
 constant $J$-spectrum $S$. By Lemma~\ref{p dim}, $J(\frakz)$  is a
$p$-dimensional subspace of $\frakso(\frakv)$.  By definition of
constant $J$-spectrum, $J(Z)$ has spectrum $S$ for all unit $Z$.
Therefore the image of the unit sphere in $\frakz$ is contained in
$\calC_S$. Because rescaling an element of $\frakso(\frakv)$ by
$\lambda \in \boldR$ rescales the spectrum by $\lambda$, all elements
in $J(\frakz)$ are in the cone $\boldR \calC_S$.
\end{proof}

The next proposition describes how to explicitly construct H-like
Lie algebras using bases with a particular form.  Recall that to form the sum 
$S = S_1 \uplus S_2 \uplus \cdots \uplus S_k$ of multisets 
$S_1, \ldots, S_k$, we add the multiplicity functions of $S_1, \ldots, S_k$.
\begin{prop}\label{subspace sum}
For $i=1, \ldots, k$, let $\frakv_i$ be a vector space of dimension at
least two, and let  $S_i$
be an admissible multiset with size equal to $\dim(\frakv_i)$.

Suppose that $W$ is a $p$-dimensional 
subspace of $\frakso(\oplus_{i=1}^k \frakv_i)$
having a basis $\calB = \{ B_1, \ldots, B_p \}$ consisting of vectors
$B_j = \oplus_{i=1}^k A_i^j, j =1, \ldots, p$, where for each $i,$ 
\begin{enumerate}
\item{ $A_i^j \in \frakso(\frakv_i)$ for each $j$,}
\item{for all $j$ the spectrum of $A_i^j$ is $S_i$,}
\item{$\myspan\{ A_i^j \}_{j=1}^p \subseteq \boldR\calC_{S_i}$,  
    and}\label{spectrum factor}
\item{$\{ A_i^j \, : \, j=1, \ldots, p \}$ is orthogonal with respect
    to the Frobenius inner product. }\label{orthogonal}
\end{enumerate}
 Then 
$W$ is a subspace of $\boldR \calC_S$, where $S = S_1 \uplus S_2
\uplus \cdots \uplus S_k$ and $\boldR \calC_S$ is the cone over the conjugacy class
$\calC_S$  in
$\frakso(\oplus_{i=1}^k \frakv_i).$
\end{prop}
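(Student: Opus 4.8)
The plan is to take an arbitrary element $B \in W$, expand it in the given basis $\calB$, and show that each of its diagonal blocks lies in its own cone $\boldR\calC_{S_i}$ with a scaling factor that is \emph{independent of $i$}; the direct sum then has spectrum $rS_1 \uplus \cdots \uplus rS_k = rS$, which places $B$ in $\boldR\calC_S$. Since $B$ is arbitrary, this gives $W \subseteq \boldR\calC_S$.

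In detail, I would fix $B = \sum_{j=1}^p c_j B_j \in W$ and set $A_i = \sum_{j=1}^p c_j A_i^j \in \frakso(\frakv_i)$, so that $B = \oplus_{i=1}^k A_i$. By hypothesis~\eqref{spectrum factor}, $A_i$ lies in $\myspan\{A_i^j\}_j \subseteq \boldR\calC_{S_i}$, so Lemma~\ref{sphere spectrum} gives $\Spec(A_i) = \tfrac{\|A_i\|}{N(S_i)}\, S_i$, where $\|\cdot\|$ is the Frobenius norm. Next I would compute $\|A_i\|$: by hypothesis~(2) each $A_i^j$ has spectrum $S_i$, and every element of $\frakso(\frakv_i)$ with spectrum $S_i$ has Frobenius norm $N(S_i)$ by Equation~\eqref{norm}, so $\|A_i^j\| = N(S_i)$ for all $j$; then by the orthogonality hypothesis~\eqref{orthogonal},
\[ \|A_i\|^2 = \sum_{j=1}^p c_j^2\,\|A_i^j\|^2 = N(S_i)^2 \sum_{j=1}^p c_j^2. \]
Hence $\tfrac{\|A_i\|}{N(S_i)} = \bigl(\sum_{j=1}^p c_j^2\bigr)^{1/2} =: r$, which does not depend on $i$.

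Finally, $\Spec(A_i) = rS_i$ for every $i$, so $\Spec(B) = \biguplus_{i=1}^k \Spec(A_i) = \biguplus_{i=1}^k rS_i = r\bigl(\biguplus_{i=1}^k S_i\bigr) = rS$; since $B$ is skew-symmetric (a direct sum of skew-symmetric matrices), this says $B \in \boldR\calC_S$, where $\calC_S$ is the conjugacy class in $\frakso(\oplus_{i=1}^k \frakv_i)$, noting that $rS$ is admissible of size $\sum_i \dim\frakv_i = \dim(\oplus_i \frakv_i)$.

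The only place the hypotheses do real work — and hence the crux — is the equality of the scaling factors $\|A_i\|/N(S_i)$ across $i$: conditions~(2) and~\eqref{orthogonal} together force the family $\{A_i^j\}_{j=1}^p$ (for fixed $i$) to be orthogonal with all members of common length $N(S_i)$, so the squared norm of any fixed linear combination $\sum_j c_j A_i^j$ is exactly $N(S_i)^2 \sum_j c_j^2$, with the nonlinearity absorbed entirely into the index-independent factor $\sum_j c_j^2$. Beyond this I anticipate no genuine obstacle; that the spectrum of a block-diagonal matrix is the multiset sum of the blocks' spectra, and that a nonnegative multiple of an admissible multiset is again admissible, are routine.
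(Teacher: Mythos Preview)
Your proof is correct and follows essentially the same route as the paper's: expand an arbitrary element in the basis $\calB$, use hypotheses~(2) and~\eqref{orthogonal} to compute that each block $A_i$ has Frobenius norm $N(S_i)\sqrt{\sum_j c_j^2}$, and then invoke Lemma~\ref{sphere spectrum} on each block to conclude that all blocks scale by the common factor $r = \sqrt{\sum_j c_j^2}$. The only cosmetic difference is that the paper also records $\|C\|^2 = N(S)^2\sum_j c_j^2$ for the full element, whereas you go directly to the block norms; your version is, if anything, slightly more streamlined.
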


\begin{proof}
 Let $\calB = \{ B_1, \ldots, B_p \}$ be a basis for $W$ as in the
statement of the proposition.  Property~\eqref{orthogonal} forces $\calB$
to be orthogonal, and by Property~\eqref{spectrum factor}, all of the vectors in $\calB$ have spectrum $S = S_1 \uplus S_2
\uplus \cdots \uplus S_k$
and Frobenius norm $N(S)$, where $N$ is the function associated to the multiset
$S$ as in Equation~\eqref{norm}.  
Let $C$ in $ W$ be nonzero.  
We may write $C$ as  $C = \sum_{j=1}^p   c_j   (\oplus_{i=1}^k A_i^j)
     =  \oplus_{i=1}^k
  \left( \sum_{j=1}^p  c_j A_i^j \right).$  Then the norm of $C$ is
  given by 
\[
\|C\|^2 = \sum_{i=1}^k \sum_{j=1}^p \| c_j A_i^j \|^2 
= \sum_{j=1}^p c_j^2 \sum_{i=1}^k   N(S_i)^2 
=N(S)^2\sum_{j=1}^p c_j^2,  \]
which shows that $\sum_{j=1}^p  c_j^2 = \|C\|^2/N(S)^2.$ 

The restriction of $C$ to $\frakv_i$ for fixed $i$ is  $C_i =
\sum_{j=1}^p  c_j A_i^j$.  By the same reasoning as 
above,  $\sqrt{\sum c_j^2} =
\|C_i\|/N(S_i)$ for all $i.$
Now we apply Lemma~\ref{sphere spectrum} to 
 $C_i$ in $\boldR\calC_{S_i}$ and see that its spectrum is 
 $\lambda S_i,$ where 
 \[ \lambda =\frac{\| C_i \|}{N(S_i)}=\sqrt{\sum c_j^2} \] 
is independent of $i.$   Summing over $i,$ we deduce that 
the spectrum of $C$ is 
\[ \lambda S_1 \uplus \cdots \uplus \lambda S_k  = \lambda (S_1 \uplus
  S_2 \uplus \cdots \uplus S_k) = \lambda S.\]

Thus, $W \subseteq \boldR \calC_S$.
\end{proof}

\begin{example}\label{subspace example}
Take $\frakv_1 = \boldR^2$ and $\frakv_2 = \boldR^2$. Let $a$ and $b$
be nonzero.  Take
\[  A_1^1 =
  \begin{bmatrix}
    0 & -a \\ a & 0 
  \end{bmatrix} \in \frakso(\frakv_1),  \quad A_1^2 =
  \begin{bmatrix}
    0 & -b \\ b & 0 
  \end{bmatrix} \in \frakso(\frakv_2), \quad \text{and} \quad B_1 = A_1^1 \oplus
  A_1^2 \subseteq \frakso(\frakv_1 \oplus \frakv_2).
\] 
Let $W$ be the subspace of $\frakso(\frakv_1 \oplus \frakv_2)$ spanned
by $A_1^1 \oplus A_1^2$.  The resulting metric Lie algebra is
isometrically isomorphic to 
$\frakh_5$ endowed with an H-like inner product as in Example~\ref{h5}. 
Here $S_1 = \{ ai, -ai\}, S_2 = \{ bi, -bi\}$, and $S_1 \uplus S_2 =
\{ ai, -ai, bi, -bi \}$. 
\end{example} 

\begin{example}
Let $(\frakf_{3,2},Q_0)$ and $J_{F_1}, J_{F_2}, J_{F_3} \in \frakso(\boldR^3)$ be as in Example~\ref{free-3-2}.  By Proposition~\ref{subspace sum}, any basis selected from the linearly
dependent set
\[   \{  J_{F_k} \oplus J_{F_l}  \, : \, k,l=1,2,3 \} \subseteq
  \frakso(\boldR^3) \oplus \frakso(\boldR^3) \subseteq \frakso(\boldR^6)\]
defines a subspace of the cone over the conjugacy class for
$\{i,i,-i,-i,0,0\}$. An example of this type appears in (4.7) of~\cite{fania-mezzetti}.
\end{example} 

The next proposition shows that Riemannian submersions in a certain class
map Lie algebras with constant spectrum onto Lie algebras with constant spectrum.  
In fact, by~\cite{eberlein-submersions-lattices}, the Riemannian submersion of 
the corresponding nilmanifolds has simply connected, flat, totally geodesic fibers.

\begin{prop}\label{submersion}
 Suppose that $(\frakn_1=\frakv_1 \oplus \frakz_1,Q_1)$ and
$(\frakn_2= \frakv_2 \oplus \frakz_2,Q_2)$ are metric Lie algebras,
and $\phi: \frakn_1 \to \frakn_2$ is a surjective homomorphism with
$\ker \phi \subseteq \frakz_1$ such that $\la X, Y \ra = \la \phi(X),
\phi(Y)\ra$ for all $X, Y \in (\ker \phi)^\perp$. If $(\frakn_1,Q_1)$
has constant spectrum $S$, then $(\frakn_2,Q_2)$ has constant
spectrum $S$.
\end{prop}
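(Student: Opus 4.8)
The plan is to transfer the constant-spectrum hypothesis from $\frakn_1$ to $\frakn_2$ by realizing each $J^{\frakn_2}_Z$ as (a conjugate of) a restriction of $J^{\frakn_1}_{Z'}$ for a suitable lift $Z'$. First I would set $\frakk = \ker\phi \subseteq \frakz_1$ and decompose $\frakz_1 = \frakk \oplus \frakk^\perp$. Since $\ker\phi$ lies in $\frakz_1$, the homomorphism $\phi$ carries $\frakv_1$ isometrically onto $\frakv_2$ (because $\frakv_1 = \frakz_1^\perp \subseteq (\ker\phi)^\perp$, and $\phi$ restricted to $(\ker\phi)^\perp$ is a linear isometry onto $\frakn_2$), and it carries $\frakk^\perp$ isometrically onto $\frakz_2$. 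Let $\psi: \frakv_2 \to \frakv_1$ and $\sigma: \frakz_2 \to \frakk^\perp \subseteq \frakz_1$ denote the inverse isometries.

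The key step is the identity
\begin{equation}\label{submersion-J}
 J^{\frakn_2}_Z = \psi^{-1} \circ J^{\frakn_1}_{\sigma(Z)} \circ \psi
 \qquad \text{for all } Z \in \frakz_2.
\end{equation}
To prove \eqref{submersion-J}, take $X, Y \in \frakv_2$ and compute $\la J^{\frakn_2}_Z X, Y\ra$ using the defining relation \eqref{def J}: it equals $\la Z, [X,Y]_{\frakn_2}\ra$. Since $\phi$ is a homomorphism, $[X,Y]_{\frakn_2} = \phi([\psi X, \psi Y]_{\frakn_1})$, and writing $[\psi X, \psi Y]_{\frakn_1} = K + Z'$ with $K \in \frakk$ and $Z' \in \frakk^\perp$, we get $\phi([\psi X,\psi Y]_{\frakn_1}) = \phi(Z')$, so $\la Z, [X,Y]_{\frakn_2}\ra = \la Z, \phi(Z')\ra = \la \sigma(Z), Z'\ra = \la \sigma(Z), [\psi X, \psi Y]_{\frakn_1}\ra = \la J^{\frakn_1}_{\sigma(Z)} \psi X, \psi Y\ra$, where the third equality uses $\sigma(Z) \in \frakk^\perp$ so that pairing against $K$ vanishes, and the metric-compatibility hypothesis lets us identify the pairing with $\la \sigma(Z), \phi(Z')\ra$. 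Since $\psi$ is an isometry, $\la J^{\frakn_1}_{\sigma(Z)} \psi X, \psi Y\ra = \la \psi^{-1} J^{\frakn_1}_{\sigma(Z)} \psi X, Y\ra$, which establishes \eqref{submersion-J}.

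Finally, for unit $Z \in \frakz_2$, $\sigma(Z)$ is a unit vector in $\frakz_1$, so by the constant-spectrum hypothesis $J^{\frakn_1}_{\sigma(Z)}$ has spectrum $S$; conjugation by the isometry $\psi$ preserves eigenvalues, so by \eqref{submersion-J} the map $J^{\frakn_2}_Z$ also has spectrum $S$. Hence $(\frakn_2, Q_2)$ has constant spectrum $S$. The main obstacle is the bookkeeping in verifying \eqref{submersion-J}: one must be careful that $\phi$ really does identify $\frakv_1$ with $\frakv_2$ and $\frakk^\perp$ with $\frakz_2$ isometrically (this uses $\ker\phi \subseteq \frakz_1$ together with the hypothesis that $\phi$ is an isometry on $(\ker\phi)^\perp$), and that the $\frakk$-component of the bracket in $\frakn_1$ is exactly what $\phi$ kills, so it does not interfere with the pairing once $\sigma(Z) \perp \frakk$.
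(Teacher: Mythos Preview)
Your proof is correct and follows essentially the same route as the paper's: both establish that $J^{\frakn_2}_{Z}$ is conjugate to $J^{\frakn_1}_{Z'}$ for the isometric lift $Z'$ of $Z$, and then invoke the constant-spectrum hypothesis. The paper packages the isometries $\psi$ and $\sigma$ together as the single map $\phi_0 = \phi|_{(\ker\phi)^\perp}$ and asserts the conjugation identity $J_{\phi_0(Z)} = \phi_0 \circ J_Z \circ \phi_0^{-1}$ directly, whereas you unpack the identity via the defining relation \eqref{def J}; your version is more explicit but the argument is the same.
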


\begin{proof}
Let $Z_2$ be an arbitrary unit vector in $\frakz_2$. Since the
restriction $\phi_0$ of $\phi$ to $(\ker \phi)^\perp$ is a bijection, 
it is invertible.  Hence $Z_2$ has a unique
pre-image $Z_1 = \phi_0^{-1}(Z_2)$ in $(\ker \phi)^\perp$ and this
pre-image has length one.  Because $(\frakn_1,Q_1)$ has spectrum $S$,
$J_{Z_1}$ has spectrum $S$.

For any $Z \in (\ker \phi)^\perp$, $J_{\phi_0(Z)} = \phi_0 \circ J_{Z}
\circ \phi_0^{-1}$. Hence $J_{Z_2} = \phi_0 \circ J_{Z_1} \circ
\phi_0^{-1}$. Since $J_{Z_2} $ is conjugate to $J_{Z_1}$, it also has
spectrum $S$.

Thus, all vectors in the unit sphere in $\frakz_2$ have spectrum $S$
and $J(\frakz_2) \subseteq \boldR \calC_S$. 
\end{proof}

The central sum construction glues two groups together along a
subgroup; when we glue two nilpotent Lie algebras together along their
centers we call this a {\em central sum}.  This was called
concatenation by Jablonski in~\cite{jablonski-11}.

\begin{defn}\label{central sum defn} 
 Let $\frakn_1$ and $\frakn_2$ be Lie algebras with centers
$Z(\frakn_1)$ and $Z(\frakn_2)$ respectively.  Let $\phi: Z(\frakn_1)
\to Z(\frakn_2)$ be a bijective linear map.
\begin{enumerate}
\item Define the Lie algebra $\frakn_1 +_\phi \frakn_2$ as $(\frakn_1
\oplus \frakn_2)/ \fraki$ where $\fraki$ is the ideal $\{(W ,- \phi(W)
) \, : \, W \in Z(\frakn_1)\}$. This Lie algebra is called the {\em
central sum of $\frakn_1$ and $\frakn_2$ defined by $\phi$.}
\item{Suppose that $Q_1$ and $Q_2$ are inner products on $\frakn_1$
and $\frakn_2$ respectively, and that $\phi$ is an isometry with
respect to $Q_1$ and $Q_2$. Let $\pi: \frakn_1 \oplus \frakn_2 \to
\frakn_1 +_\phi \frakn_2$ be the natural projection map.  Define the
inner product $Q$ on $\frakn_1 +_\phi \frakn_2$ so that the
projections $\pi|_{\frakn_1}$ and $\pi|_{\frakn_2}$ are isometries.
Then the metric nilpotent Lie algebra $(\frakn_1 +_\phi \frakn_2, Q)$
defined by $J$ is called the {\em (metric) central sum of
$(\frakn_1,Q_1)$ and $(\frakn_2,Q_2)$ defined by $\phi$.}}\label{metric central sum}
\end{enumerate}

\end{defn}
It is not hard to check that the inner product $Q$
in Definition~\ref{central sum defn}~\eqref{metric central sum} is well-defined.

This construction may be iterated.  For
example the $(2k+1)$-dimensional Heisenberg algebra may be viewed as a
$(k-1)$-fold central sum of $3$-dimensional Heisenberg algebras.
In Section 6 of~\cite{decoste-demeyer-mast}, the authors present
families of H-like Lie algebras which  are central sums.  We show that in general, central sums of
Lie algebras with constant spectrum again have constant spectrum.

An elementary way to define a subspace of the 
cone over a  conjugacy class $\boldR C_{S}$ is by
taking the direct sum of subspaces of $\boldR C_{S_1}$ and $\boldR
C_{S_2},$ where $S = S_1 \uplus S_2.$
Taking direct sums of subspaces translates to a natural construction of metric
Lie algebra
called the metric central sum.    
Before we state the proposition we make some simple observations
about the structure of metric central sums.  Write $\frakn =
\frakn_1 +_\phi \frakn_2$. 
Because $\pi$ is a surjective
homomorphism, \[ [\frakn,\frakn] = \pi([\frakn_1 \oplus \frakn_2,
\frakn_1 \oplus \frakn_2])= \pi(\frakz_1 \oplus \frakz_2) =
\pi(\frakz_1).\] 
The restriction of the canonical projection
map $\pi: \frakn_1 \oplus \frakn_2 \to \frakn$ to $\frakv_1 \oplus
\frakv_2$ is an isometry, and the restriction of $\pi$ to $\frakz_1
\oplus \{0\}$ or $\{0\} \oplus \frakz_2$ is an isometry to
$\pi(\frakz_1 \oplus \frakz_2)$. We thus can identify $\frakz =[\frakn,\frakn]$ with
$\frakz_1$ or $\frakz_2$. We let $\frakv =
\frakz^\perp$, so $\frakn = \frakv \oplus \frakz$. 
After using the three isomorphisms to make appropriate identifications 
we can view the $J$ map for
$\frakn$ as a map from $\frakz_1$ to $\frakso(\frakv_1 \oplus
\frakv_2)$.
\begin{prop}\label{central sum prop} 
Suppose that $(\frakn_1 = \frakv_1 \oplus \frakz_1,Q_1)$ and
$(\frakn_2 = \frakv_2 \oplus \frakz_2,Q_2)$ are metric nilpotent Lie
algebras with centers $Z(\frakn_1)$ and $Z(\frakn_2)$ respectively.
Let $\phi: Z(\frakn_1) \to Z(\frakn_2)$ be an isometry.
\begin{enumerate}
\item 
  Denote the
$J$ map for the metric central sum $\frakn = \frakn_1 +_\phi \frakn_2$
by
\[ J^{\frakn}: \frakz \cong \frakz_1 \to \frakso(\frakv)\cong
\frakso(\frakv_1 \oplus \frakv_2)\] For $Z = \pi(Z_1) \in \frakz$,
\begin{equation}\label{JnZ} 
J^{\frakn} (Z) =   J^{\frakn_1}(Z_1)  \oplus  J^{\frakn_2}( \phi (Z_1) ) \subseteq \frakso(\frakv_1)
  \oplus \frakso(\frakv_2) \subseteq  \frakso(\frakv_1 \oplus
  \frakv_2).\end{equation}
In particular, if $(\frakn_1,Q_1)$ and $(\frakn_2,Q_2)$ have
constant spectra $S_1$ and $S_2$ respectively, then $(\frakn_1 +_\phi
\frakn_2, Q)$ has constant spectrum $S_1 \uplus S_2$.
\item{The subspace of $\frakso(\frakv)$ corresponding to 
$(\frakn_1 +_\phi \frakn_2, Q)$  is $J^{\frakn_1}(\frakz_1) \oplus
J^{\frakn_2}(\frakz_2) \subseteq \frakso(\frakv_1) \oplus
\frakso(\frakv_2)  \subseteq \frakso(\frakv).$}\label{(2)}
\end{enumerate}
\end{prop}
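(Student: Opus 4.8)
The plan is to prove Proposition~\ref{central sum prop} by unwinding the definition of the metric central sum and the identifications described in the paragraph preceding the statement, then reading off the $J$-map formula and applying Theorem~\ref{convex thm} and the direct-sum behavior already established. First I would fix the notation: write $\frakn = \frakn_1 +_\phi \frakn_2 = (\frakn_1 \oplus \frakn_2)/\fraki$ with $\fraki = \{(W, -\phi(W)) : W \in Z(\frakn_1)\}$, and let $\pi : \frakn_1 \oplus \frakn_2 \to \frakn$ be the projection. As observed before the statement, $\frakz := [\frakn,\frakn] = \pi(\frakz_1)$, and $\pi$ restricts to isometries $\frakv_1 \oplus \frakv_2 \to \frakv$ and $\frakz_1 \oplus \{0\} \to \frakz$ (equivalently $\{0\} \oplus \frakz_2 \to \frakz$); I would use these to identify $\frakv \cong \frakv_1 \oplus \frakv_2$ and $\frakz \cong \frakz_1 \cong \frakz_2$, where the two identifications of $\frakz$ differ by $\phi$.

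For part (1), I would compute $J^{\frakn}(Z)$ directly from \eqref{def J}. Take $Z = \pi(Z_1, 0) = \pi(0, \phi(Z_1))$ with $Z_1 \in \frakz_1$, and take $X = \pi(X_1, X_2)$, $Y = \pi(Y_1, Y_2)$ with $X_i, Y_i \in \frakv_i$. Since $\pi$ is a homomorphism and $\frakn_1, \frakn_2$ commute inside $\frakn_1 \oplus \frakn_2$, we get $[X,Y] = \pi([X_1,Y_1]_{\frakn_1} + [X_2,Y_2]_{\frakn_2})$ in $\frakn_1 \oplus \frakn_2$, where $[X_1,Y_1]_{\frakn_1} \in \frakz_1$ and $[X_2,Y_2]_{\frakn_2} \in \frakz_2$. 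Using that $\pi$ is an isometry on $\frakz_1 \oplus \frakz_2 \to \frakz$ composed appropriately with the identification that sends $\frakz_2$-part back across $\phi$, the defining relation $\la J^{\frakn}(Z) X, Y \ra = \la Z, [X,Y] \ra$ splits as $\la Z_1, [X_1,Y_1]_{\frakn_1} \ra_{Q_1} + \la \phi(Z_1), [X_2,Y_2]_{\frakn_2} \ra_{Q_2} = \la J^{\frakn_1}(Z_1) X_1, Y_1 \ra + \la J^{\frakn_2}(\phi(Z_1)) X_2, Y_2 \ra$, using that $\phi$ is an isometry. Since $X, Y$ are arbitrary and the inner product on $\frakv$ is the orthogonal sum, this yields exactly \eqref{JnZ}: $J^{\frakn}(Z) = J^{\frakn_1}(Z_1) \oplus J^{\frakn_2}(\phi(Z_1))$ as a block-diagonal element of $\frakso(\frakv_1) \oplus \frakso(\frakv_2) \subseteq \frakso(\frakv_1 \oplus \frakv_2)$. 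The spectrum statement is then immediate: if $\|Z\|=1$ then $\|Z_1\| = \|\phi(Z_1)\| = 1$ (by the isometry property and the fact that $\pi$ is an isometry on $\frakz_1$), so $J^{\frakn_1}(Z_1)$ has spectrum $S_1$ and $J^{\frakn_2}(\phi(Z_1))$ has spectrum $S_2$, and the spectrum of a block-diagonal matrix is the disjoint union $S_1 \uplus S_2$. By Theorem~\ref{convex thm}(1) (or directly), $(\frakn,Q)$ has constant spectrum $S_1 \uplus S_2$.

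For part (2), the subspace of $\frakso(\frakv)$ corresponding to $(\frakn,Q)$ under the Eberlein correspondence is by definition $J^{\frakn}(\frakz)$. From \eqref{JnZ}, as $Z_1$ ranges over $\frakz_1$, $J^{\frakn}(Z)$ ranges over $\{ J^{\frakn_1}(Z_1) \oplus J^{\frakn_2}(\phi(Z_1)) : Z_1 \in \frakz_1 \}$; since $\phi : Z(\frakn_1) \to Z(\frakn_2)$ restricts to a bijection $\frakz_1 \to \frakz_2$ (note $\frakz_i = [\frakn_i,\frakn_i] \subseteq Z(\frakn_i)$, so I would first note, or assume as part of the standard setup, that $\phi$ carries $\frakz_1$ onto $\frakz_2$ — this is the one point needing a word of justification), this set is precisely the graph-like image, and projecting it to each factor shows it equals $J^{\frakn_1}(\frakz_1) \oplus J^{\frakn_2}(\frakz_2)$ inside $\frakso(\frakv_1) \oplus \frakso(\frakv_2)$. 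I expect the only real subtlety — the main obstacle, such as it is — to be bookkeeping the three identifications ($\frakv \cong \frakv_1 \oplus \frakv_2$, $\frakz \cong \frakz_1$, $\frakz \cong \frakz_2$) consistently so that the $\phi$ appears in the right place in \eqref{JnZ} and does not spuriously appear or disappear; everything else is a routine transport of \eqref{def J} through the isometry $\pi$. I would keep the exposition short, since the structural observations preceding the statement have already done most of the work.
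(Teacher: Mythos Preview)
Your proposal is correct and follows essentially the same route as the paper: both arguments compute $\la J^{\frakn}_{\pi(Z_1)}\,\pi(X_1+X_2),\,\pi(Y_1+Y_2)\ra$ directly from \eqref{def J}, split it as $\la Z_1,[X_1,Y_1]\ra_{Q_1}+\la \phi(Z_1),[X_2,Y_2]\ra_{Q_2}$, and read off \eqref{JnZ}; the spectrum claim then follows from $\|Z_1\|=\|\phi(Z_1)\|=1$ and the block-diagonal form, and part~(2) is declared an immediate consequence of \eqref{JnZ}. The one point you flag---that $\phi$ should carry $\frakz_1$ onto $\frakz_2$---is handled in the paper by tacitly assuming the H-like setting so that $Z(\frakn_i)=\frakz_i$; your observation that this deserves a word is apt.
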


\begin{proof}
Suppose $\frakn_1$ is type $(p_1, q_1)$ and $\frakn_2$ is type $(p_2,
q_2)$. Because $(\frakn_1,Q_1)$ and $(\frakn_2,Q_2)$ are H-like,
$Z(\frakn_1) = \frakz_1$ and $Z(\frakn_2) = \frakz_2$. Because $\phi$
is an isometry, $p_1 = p_2$.

Let $S_1$ and $S_2$ be the spectra for $(\frakn_1,Q_1)$ and
$(\frakn_2,Q_2)$ respectively.  From Theorem~\ref{convex thm},
$J(\frakz_1)$ is a $p_1$-dimensional subspace of $\boldR\mathcal{C}_{S_1}$
contained in the cone over the conjugacy class $\calC_{S_1}$, and
$J(\frakz_2)$ is a $p_2$-dimensional subspace of $\boldR\mathcal{C}_{S_2}$
contained in the cone over the conjugacy class $\calC_{S_2}$.

We write the map $ J^{\frakn}$ for $(\frakn_1 +_\phi \frakn_2, Q)$ in
terms of $J^{\frakn_1}: \frakz_1 \to \frakso(\frakv_1)$ and
$J^{\frakn_2}: \frakz_2 \to \frakso(\frakv_2)$. Let $X_1, Y_1 \in
\frakv_1, X_2, Y_2 \in \frakv_2$, and $Z_1 \in \frakz_1$. Denote their
images under $\pi$ with bars.  Then
\begin{align*}
  \la J^{\frakn}_{\overline{Z_1}}  \overline{(X_1 + X_2)},
  \overline{(Y_1 + Y_2)} \ra &=
\la  \overline{Z_1} , \overline{[X_1,Y_1]}  \ra + \la  \overline{\phi(Z_1)} ,
                               \overline{[X_2,Y_2]} \ra  \\
&= 
\la  Z_1 , [X_1,Y_1] \ra + \la  \phi(Z_1),
                               [X_2,Y_2]\ra  \\
&= 
\la  J_{Z_1}^{\frakn_1} X_1,Y_1 \ra  + \la  J_{\phi(Z_1)}^{\frakn_2}
                               X_2,Y_2 \ra .
\end{align*}
Thus, Equation~\eqref{JnZ} holds.

Let $Z = \pi(Z_1)$ be a unit vector in $\frakz$. Because $\pi$ maps
$\frakz_1$ onto $\frakz$ isometrically, $Z_1$ is a unit vector in
$\frakz_1$. Hence $J(Z_1)$ has spectrum $S_1$. Because $\phi$ is an
isometry, $\phi(Z_1)$ has norm one.  Therefore, $J(\phi(Z_1))$ has
spectrum $S_2$. It follows that $J^{\frakn}(Z)$ as in Equation~\eqref{JnZ} 
has spectrum $ S_1 \uplus S_2$. By Theorem~\ref{convex
thm}, $(\frakn_1 +_\phi \frakn_2, Q)$ is H-like with constant spectrum
$ S_1 \uplus S_2$.

Statement \eqref{(2)} follows from Equation \eqref{JnZ}.
\end{proof}

Proposition~\ref{J unitary} could have been used to
complete the first part of the proof, along with the fact  that $\{
J(Z_i) \}_{i=1}^{p_1}$ and $\{ J( \phi(Z_i)) \}_{i=1}^{p_1}$ are
orthogonal bases for $\frakso(\frakv_1)$ and $\frakso(\frakv_2)$
respectively.

The tensor product construction of $\frakn \otimes S$ 
 in Definition 
\ref{tensor product def} is a central sum, with 
each summand corresponding to an element of
a linearly independent set of eigenvectors for the symmetric map
$S.$  

\section{Classification of H-like Lie
  algebras with  $J$-Rank two}\label{classify rank two}

Throughout this section we let $E(Z)$ denote the eigenspace for the
nonzero eigenvalue of $J_Z^2$, where $J$ is associated to a two-step
metric nilpotent Lie algebra of $J$-rank two.  Note that if $(\frakn,Q)$ is a two-step
metric nilpotent Lie algebra, $J_Z \not \equiv 0$ for all nonzero $Z
\in \frakz$. Note that when $(\frakn = \frakv \oplus \frakz, Q)$ has
no abelian factors (as in the H-like case), $\oplus_{i=1}^p E(Z_i) = \frakv$, where $\{Z_i\}$ is a basis for $\frakz$.

\begin{lemma}\label{E(Z)}
Let $(\frakn = \frakv \oplus \frakz,Q)$ be a metric nilpotent Lie
algebra. If $(\frakn,Q)$ is H-like and has $J$-rank two, then for any
independent vectors $Z_1$ and $Z_2$ in $\frakz$, 
\begin{enumerate}
\item{$E(Z_1) \ne  E(Z_2)$}\label{neq}
\item $E(Z_1) \cap E(Z_2)$ is one-dimensional, and
\item{if  $Z_1$ and $Z_2$ are orthogonal, and nonzero $X$ is in 
$E(Z_1)\cap E(Z_2)$,  the vectors  $J_{Z_1} X$ and $ J_{Z_2} X$ are nonzero and orthogonal.  }\label{og}
\end{enumerate}
\end{lemma}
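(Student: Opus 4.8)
The plan is to first record the linear structure of $E(Z)$ and then prove the three parts in order. Since $(\frakn,Q)$ has $J$-rank two, for every nonzero $Z\in\frakz$ the skew map $J_Z$ has rank two, so $J_Z^2$ is self-adjoint and negative semidefinite with $E(Z)=\operatorname{im}(J_Z)=(\ker J_Z)^\perp$ two-dimensional, $\ker J_Z=E(Z)^\perp$, and $J_Z^2$ acting on $E(Z)$ as a negative scalar times the identity; rescaling the inner product on $\frakz$ (which affects neither hypotheses nor conclusions) we may take that scalar to be $-\|Z\|^2$. In particular $J_Z$ kills $E(Z)^\perp$ and maps $\frakv$ into $E(Z)$. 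For (1): if $E(Z_1)=E(Z_2)=:E$, then $J_{Z_1}|_E$ and $J_{Z_2}|_E$ are nonzero elements of the one-dimensional space $\frakso(E)$, so $J_{Z_2}|_E=c\,J_{Z_1}|_E$ for some $c\neq 0$; then $J_{Z_2-cZ_1}$ vanishes on $E$ and on $E^\perp=\ker J_{Z_1}=\ker J_{Z_2}$, so $J_{Z_2-cZ_1}\equiv 0$, contradicting (as in the proof of Lemma~\ref{p dim}) that $J_W\not\equiv 0$ for nonzero $W$ in the commutator $\frakz$.

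For (2): by (1), $E(Z_1)\neq E(Z_2)$, so $\dim\bigl(E(Z_1)\cap E(Z_2)\bigr)\leq 1$, and it remains to show this dimension is positive, i.e.\ $\dim\bigl(E(Z_1)+E(Z_2)\bigr)\leq 3$. (We may assume $q\geq 4$: for $q=3$ two distinct planes in $\frakv$ already meet in a line, and $q=2$ is vacuous by Lemma~\ref{p dim}.) Suppose instead $E(Z_1)\cap E(Z_2)=\{0\}$, and set $U_i=E(Z_i)=\operatorname{im}(J_{Z_i})$, so $U_i^\perp=\ker J_{Z_i}$ and $\dim(U_1+U_2)=4$. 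Since $Z_1+Z_2\neq 0$, $J_{Z_1+Z_2}$ has rank two, so $\ker J_{Z_1+Z_2}$ has dimension $q-2$ and contains $(U_1+U_2)^\perp=\ker J_{Z_1}\cap\ker J_{Z_2}$, of dimension $q-4$; hence $\ker J_{Z_1+Z_2}$ meets the four-plane $U_1+U_2$ in dimension at least $(q-2)+4-q=2$. Pick a nonzero $v$ in this intersection and write $v=v_1+v_2$ with $v_i\in U_i$ (unique as $U_1\cap U_2=\{0\}$); then $0=J_{Z_1+Z_2}v=J_{Z_1}v+J_{Z_2}v$ with $J_{Z_1}v\in U_1$ and $J_{Z_2}v\in U_2$, forcing $J_{Z_1}v=J_{Z_2}v=0$, so $v\in\ker J_{Z_1}\cap\ker J_{Z_2}=(U_1+U_2)^\perp$; combined with $v\in U_1+U_2$ this gives $v=0$, a contradiction.

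For (3): nonvanishing is immediate, since $X\in E(Z_i)$ and $X\neq 0$ give $J_{Z_i}^2X=-\|Z_i\|^2X\neq 0$. For orthogonality, write $\lambda_i=\|Z_i\|$, normalize $\|X\|=1$, and set $Y_i=\lambda_i^{-1}J_{Z_i}X$; then $\{X,Y_i\}$ is an orthonormal basis of $E(Z_i)$ with $J_{Z_i}Y_i=-\lambda_iX$, and a short computation using $\operatorname{im}(J_{Z_j})=E(Z_j)$ and skew-symmetry gives $J_{Z_2}Y_1=-c\lambda_2X$ and $J_{Z_1}Y_2=-c\lambda_1X$, where $c=\langle Y_1,Y_2\rangle$; by (1), $Y_1\neq\pm Y_2$, so $c^2<1$. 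The crucial step is that $X\in E(aZ_1+bZ_2)$ for every $(a,b)\neq(0,0)$: applying $J_{aZ_1+bZ_2}$ to $-Y_1$ and to $-Y_2$ yields the multiples $(a\lambda_1+bc\lambda_2)X$ and $(ac\lambda_1+b\lambda_2)X$ of $X$, and the $2\times 2$ coefficient matrix has determinant $\lambda_1\lambda_2(1-c^2)>0$, so one of these is a nonzero multiple of $X$ and hence $X\in\operatorname{im}(J_{aZ_1+bZ_2})=E(aZ_1+bZ_2)$. Thus $X$ lies in $E(Z_1)$, $E(Z_2)$ and $E(Z_1+Z_2)$; applying $(J_{Z_1}+J_{Z_2})^2=J_{Z_1+Z_2}^2$ to $X$ and substituting $J_{Z_i}^2X=-\lambda_i^2X$ and $J_{Z_1+Z_2}^2X=-\|Z_1+Z_2\|^2X$ gives $(J_{Z_1}J_{Z_2}+J_{Z_2}J_{Z_1})X=-2\langle Z_1,Z_2\rangle X$. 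Finally, using skew-symmetry of $J_{Z_1}$ and of $J_{Z_2}$, $2\langle J_{Z_1}X,J_{Z_2}X\rangle=-\langle(J_{Z_1}J_{Z_2}+J_{Z_2}J_{Z_1})X,X\rangle=2\langle Z_1,Z_2\rangle$, which vanishes since $Z_1\perp Z_2$.

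I expect the main obstacle to be the claim in (3) that $X$ belongs to $E(Z)$ for the entire pencil $Z\in\myspan\{Z_1,Z_2\}$, in particular to $E(Z_1+Z_2)$: without knowing $J_{Z_1+Z_2}^2X=-\|Z_1+Z_2\|^2X$ the identity above only yields the inequality $\langle J_{Z_1}X,J_{Z_2}X\rangle\leq\langle Z_1,Z_2\rangle\|X\|^2$, so this is exactly where the rank-two hypothesis and part (1) enter. The remaining points are short linear algebra, and the dimension count in (2) keeps that part self-contained without invoking Theorem~\ref{rank 2 classification}.
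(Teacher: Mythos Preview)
Your proof is correct. For parts~(1) and~(2) you follow essentially the paper's argument, only fleshing out what the paper states tersely: the paper simply asserts that if $E(Z_1)\cap E(Z_2)=\{0\}$ then ``$J_{Z_1+Z_2}$ will have rank four,'' which packages your dimension count together with the observation that $J_{Z_1}v=-J_{Z_2}v$ forces both sides into $E(Z_1)\cap E(Z_2)$.

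For part~(3) you take a genuinely different route. The paper invokes Proposition~\ref{J unitary}: since $Z_1\perp Z_2$, the maps $J_{Z_1}$ and $J_{Z_2}$ are Frobenius-orthogonal in $\frakso(\frakv)$; choosing an orthogonal basis with $X_1=X$ and $J_{Z_1}=X_1\wedge X_2$, this forces the $X_1\wedge X_2$-coefficient of $J_{Z_2}$ to vanish, so $J_{Z_2}X$ has no $X_2$-component and is orthogonal to $J_{Z_1}X$. Your argument avoids Proposition~\ref{J unitary} entirely: you first establish the pencil property $X\in E(aZ_1+bZ_2)$ for all $(a,b)\ne(0,0)$ (a nice intermediate fact, and exactly where the rank-two hypothesis and part~(1) enter via $c^2<1$), then apply the H-like eigenvalue identity to $J_{Z_1+Z_2}^2X$ and polarize. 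This is self-contained and in fact yields the sharper identity $\la J_{Z_1}X,\,J_{Z_2}X\ra=\la Z_1,Z_2\ra\,\|X\|^2$ for arbitrary independent $Z_1,Z_2$, not just orthogonal ones. The paper's approach is shorter and exhibits part~(3) as an instance of the global unitarity of $J$; yours is longer but makes the local mechanism at $X$ explicit and does not depend on any earlier proposition.
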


Part~(\ref{og}) of the lemma is a special case of Theorem~3.8b 
of~\cite{decoste-demeyer-mast}, which in turn relies on Lemma~3.3
of~\cite{gornet-mast-00}.  We give an
alternate proof.  

\begin{proof} 
Assume that $(\frakn,Q)$ is H-like and $Z_1$ and $Z_2$ are independent vectors in $\frakz$. Then
$Z_1, Z_2 \ne 0$.
If $E(Z_1) = E(Z_2)$, then because $(\frakn,Q)$ is H-like, $\lambda_1
J_{Z_1} = \lambda_2 J_{Z_2}$ for nonzero $\lambda_1, \lambda_2 \in \boldR$.
It follows that $Z = \lambda_1 Z_1 - \lambda_2 Z_2$ has $J_{Z} \equiv 0$.  Hence 
$Z=0$, contradicting the independence of $Z_1, Z_2$.

Now assume $(\frakn,Q)$ has $J$-rank two.  We claim $E(Z_1) \cap E(Z_2) \ne \{0\}$. 
Otherwise $E(Z_1) + E(Z_2)$ is four-dimensional and $J_{Z_1 + Z_2}$ will have rank four, a
contradiction.  Also, $E(Z_1) \cap E(Z_2)$ is not two-dimensional,
because that would lead to the contradiction $E(Z_1) = E(Z_2)$.

Suppose that $Z_1$ and $Z_2$ above are orthogonal and $X \in E(Z_1)\cap E(Z_2)$ is
nonzero.  Then certainly $J_{Z_1} X$ and $ J_{Z_2} X$ are nonzero.
Since $J_{Z_1}$ has rank two, there exists an orthogonal
basis $\{X_1, X_2,\dots,X_p\}$ for $\frakv$ so that $X=X_1$, and with respect to this
basis, $J_{Z_1} = X_1 \wedge X_2 $ in $\frakso(\frakv)$. 
Proposition~\ref{J unitary} forces $J_{Z_1}$ and $J_{Z_2}$ to be 
orthogonal in $\frakso(\frakv)$. Hence, 
 \[ J_{Z_2} = \sum_{  \substack{i < j \\  (i,j) \ne
    (1,2)}}    a_{ij} X_i \wedge X_j .\]
Evaluating $J_{Z_2}$ at $X$ gives $J_{Z_2} X = J_{Z_2} X_1=
\sum_{j=3}^q a_{1j} X_j$. Therefore $J_{Z_1} X$ and $J_{Z_2} X$ are
orthogonal.
\end{proof}

We have seen in Examples~\ref{free-3-2} and~\ref{star algebra}
that the free two-step nilpotent Lie algebra on three generators and
certain two-step nilpotent Lie algebras with codimension one abelian
ideals admit H-like inner products.  In fact, these are all  the 
H-like Lie algebras with $J$-rank two.

\begin{thm}\label{classify J-rank 2}
Suppose that  $(\frakn, Q)$ is H-like and has $J$-rank two.
\begin{itemize}
\item  If $\cap_{Z \ne 0} E(Z)$ is nontrivial, then
it is one-dimensional, and $(\frakn,Q)$ is homothetic to
$(\frakn^p,Q_0)$, the metric Lie algebra with $p$-dimensional center
as in Example~\ref{star algebra}.
\item If $\cap_{Z \ne 0} E(Z) = \{0\}$, then
$(\frakn,Q)$ is homothetic to $(\frakf_{3,2},Q_0)$, the free two-step
nilpotent Lie algebra endowed with the inner product $Q_0$ as in
Example~\ref{free-3-2}.
\end{itemize}
\end{thm}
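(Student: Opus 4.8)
The plan is to set $p = \dim\frakz$ and analyze two-step nilpotent Lie algebras of $J$-rank two via the structure of the eigenspaces $E(Z)$, using Lemma~\ref{E(Z)} and Proposition~\ref{J unitary} as the main tools. First I would fix an orthonormal basis $\{Z_1,\dots,Z_p\}$ of $\frakz$; since $(\frakn,Q)$ is H-like with $J$-rank two, after rescaling $Q$ we may assume each $J_{Z_i}$ has spectrum $\{i,-i,0,\dots,0\}$, i.e. $N(S)^2$ is normalized so that Proposition~\ref{J unitary} reads $\la J_{Z_i}, J_{Z_j}\ra_{\text{Frobenius}} = \delta_{ij}$ (up to a fixed constant). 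Then I would split into the two cases of the statement.

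\emph{Case 1: $K := \cap_{Z\ne 0} E(Z) \ne \{0\}$.} Each $E(Z)$ is two-dimensional, so $K$ is one- or two-dimensional; if $K$ were two-dimensional then $E(Z_1)=E(Z_2)=K$, contradicting Lemma~\ref{E(Z)}\eqref{neq}. So $\dim K = 1$; pick a unit $X_0 \in K$. For each $i$, $J_{Z_i}X_0 \ne 0$, and by Lemma~\ref{E(Z)}\eqref{og} the vectors $J_{Z_1}X_0,\dots,J_{Z_p}X_0$ are pairwise orthogonal (and after normalizing, orthonormal, since $\|J_{Z_i}X_0\| = \|J_{Z_i}\|_{\text{op restricted to }E(Z_i)}$ is constant in $i$ because all $J_{Z_i}$ are conjugate). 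Set $Y_i := J_{Z_i}X_0$. Since $J_{Z_i}$ has rank two with $E(Z_i) = \myspan\{X_0, Y_i\}$ and is skew, $J_{Z_i} = X_0 \wedge Y_i$ (up to the fixed scalar), so $[X_0, Y_i] = Z_i$ and $[X_0, Y_j] = 0$ for... wait — one must check that $Y_1,\dots,Y_p$ together with $X_0$ span a $(p+1)$-dimensional space and that $\frakv = \myspan\{X_0, Y_1,\dots,Y_p\}$, which holds because $\oplus_i E(Z_i) = \frakv$ (the H-like hypothesis rules out abelian factors, as noted before Lemma~\ref{E(Z)}) and $E(Z_i) = \myspan\{X_0, Y_i\}$. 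Then the only possibly-nonzero brackets among basis vectors are $[X_0, Y_i] = Z_i$, because $J_{Z_k}$ vanishes on $\myspan\{Y_i : i\}$ for the relevant pairings (one checks $\la J_{Z_k}Y_i, Y_j\ra = \la Z_k, [Y_i,Y_j]\ra$ and uses that $E(Z_k) \ni X_0$ with $J_{Z_k}Y_i \in \myspan\{X_0\}$... this needs a short argument: $Y_i \in E(Z_i) \cap E(Z_k)^{?}$ — actually use that $J_{Z_k} = X_0 \wedge Y_k$ so $J_{Z_k}Y_i = \la Y_k, Y_i\ra X_0 = 0$ for $i \ne k$ and $J_{Z_k}Y_k = X_0$, hence $[Y_i, Y_j]$ pairs to zero against every $Z_k$). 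This exhibits $(\frakn,Q)$ as $(\frakn^p, Q_0)$ of Example~\ref{star algebra}, with $E_0 = X_0$, $E_k = Y_k$, $F_k = Z_k$, finishing Case 1.

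\emph{Case 2: $\cap_{Z\ne 0} E(Z) = \{0\}$.} Here I would first show $p \le 3$. For any three independent $Z_1, Z_2, Z_3$, the lines $\ell_{ij} := E(Z_i) \cap E(Z_j)$ are one-dimensional (Lemma~\ref{E(Z)}) and I claim $\ell_{12}, \ell_{13}, \ell_{23}$ are distinct and span a three-dimensional subspace $\frakv' \subseteq \frakv$ invariant under all $J_{Z_k}$; since $\oplus_i E(Z_i) = \frakv$ and each $E(Z_i) = \ell_{ij} + \ell_{ik} \subseteq \frakv'$, we get $\frakv = \frakv'$, hence $\dim\frakv = 3$ and (since each $J_{Z_i}$ has rank two, i.e. corank one, in $\frakv$, and the $J_{Z_i}$ are independent with $\frakso(\frakv')$ only three-dimensional) $p \le 3$; and $p < 3$ would force $\cap E(Z)$ nontrivial in dimension reasons, contradicting the case hypothesis, so $p = 3$ and $\dim\frakv = 3$. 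Thus $\frakn$ has type $(3,3)$, $J(\frakz)$ is a three-dimensional subspace of $\frakso(\boldR^3)$ (Lemma~\ref{p dim}), hence all of $\frakso(\boldR^3)$, and by Proposition~\ref{J unitary} the map $J : \frakz \to \frakso(\frakv)$ is (a scalar multiple of) an isometry onto $\frakso(\boldR^3)$. Since the standard metric algebra defined by $\frakso(\boldR^3)$ is exactly $(\frakf_{3,2}, Q_0)$ (Example~\ref{free-3-2}), Theorem~\ref{convex thm}(2) gives that $(\frakn,Q)$ is homothetic to $(\frakf_{3,2},Q_0)$.

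\emph{Main obstacle.} The delicate part is the structural argument in Case 2 that the pairwise-intersection lines $\ell_{ij}$ are distinct and span a $J$-invariant three-space forcing $\dim\frakv = 3$ and $p = 3$; one has to rule out $\ell_{12} = \ell_{13}$ (which would put a common vector in $E(Z_1)\cap E(Z_2)\cap E(Z_3)$ and, pushing across all of $\frakz$, contradict $\cap_{Z\ne 0}E(Z) = \{0\}$ — this requires showing a vector lying in three of the $E(Z_i)$ lies in $E(Z)$ for all $Z$ in their span, via rank-two/eigenspace bookkeeping) and confirm the invariance claim using $\la J_{Z_k} X, Y\ra = \la Z_k, [X,Y]\ra$ together with $X, Y \in \frakv'$. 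In Case 1 the only subtlety is justifying $J_{Z_i} = X_0 \wedge Y_i$ exactly (not merely up to adjusting $Y_i$), which follows from skew-symmetry, rank two, and the normalization of the spectrum. Everything else is routine once the eigenspace geometry is pinned down.
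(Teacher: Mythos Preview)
Your Case 1 is essentially the paper's argument. Case 2, however, has a genuine circularity. You write ``since $\oplus_i E(Z_i) = \frakv$ and each $E(Z_i) = \ell_{ij} + \ell_{ik} \subseteq \frakv'$, we get $\frakv = \frakv'$,'' but the identity $\sum_i E(Z_i) = \frakv$ (from the no-abelian-factor remark before Lemma~\ref{E(Z)}) is a sum over a \emph{full basis} of $\frakz$, whereas your containments $E(Z_i)\subseteq\frakv'$ are only established for $i=1,2,3$. So the conclusion $\dim\frakv = 3$ (and hence $p\le 3$) is exactly what you are trying to prove. The same circularity bites your handling of the obstacle: showing that a vector in $E(Z_1)\cap E(Z_2)\cap E(Z_3)$ lies in $E(Z)$ for all $Z$ in their span does not contradict $\bigcap_{Z\ne 0} E(Z)=\{0\}$ unless $\myspan\{Z_1,Z_2,Z_3\}=\frakz$, i.e.\ unless $p\le 3$ already. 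For arbitrary three independent $Z_i$'s in a hypothetical $p\ge 4$ algebra, the triple intersection could well be nonzero, so the lines $\ell_{ij}$ need not be distinct.

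Your route \emph{can} be repaired, and it then differs from the paper's. First pick the triple carefully: take any $Z_1,Z_2$, let $X$ span $E(Z_1)\cap E(Z_2)$, and use the case hypothesis to find $Z_3$ with $X\notin E(Z_3)$; then $E(Z_1)\cap E(Z_2)\cap E(Z_3)=\{0\}$ and the three lines are distinct, giving a genuine $3$-plane $\frakv'$. Next, for $k\ge 4$, each $E(Z_k)\cap E(Z_j)$ ($j=1,2,3$) is a line in $\frakv'$; since these three lines sit in the $2$-plane $E(Z_k)$, either two are independent (so $E(Z_k)\subseteq\frakv'$) or they all coincide, forcing a nonzero vector in $E(Z_1)\cap E(Z_2)\cap E(Z_3)$, a contradiction. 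Thus every $E(Z_k)\subseteq\frakv'$, giving $\frakv=\frakv'$ and $J(\frakz)\subseteq\frakso(\boldR^3)$, whence $p\le 3$. The paper instead rules out $p>3$ by a direct geometric contradiction: it fixes $X_1,X_2,X_3$ as in your $p=3$ step, takes $Y_1$ spanning $E(Z_4)\cap E(Z_1)$, uses surjectivity of the map $\boldR Z\mapsto E(Z)$ from lines in $\myspan\{Z_1,Z_2,Z_3\}$ to $2$-planes in $\myspan\{X_1,X_2,X_3\}$ to find $Z$ with $E(Z)\perp Y_1$, and then shows $E(Z_4)\cap E(Z)$ is forced to be $\myspan\{Y_1\}$, contradicting $Y_1\perp E(Z)$. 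Your dimension-count approach is arguably cleaner once the gap is filled, but the paper's argument is self-contained and avoids invoking Theorem~\ref{convex thm}.
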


\begin{proof}
We are classifying up to homothety, so we assume the nonzero elements
of the spectrum are $i$ and $-i$.
We break our argument into cases depending on the value of $p=\dim \frakz$.

First suppose that $p=1$ and $Z$ is a unit vector in $\frakz$. If
$J_Z$ has any zero eigenvalues, then $\frakn$ has an abelian factor, a
contradiction.  Thus, for unit $Z$, $J_Z$ must be conjugate to a
multiple of $X_1 \wedge X_2$. Hence $\frakn$ is isomorphic to
$\frakh_3$. But $\frakh_3$ is isomorphic to $\frakn^1$, and after
rescaling the metric, $(\frakn,Q)$ is isomorphic to $(\frakn^p,Q_0)$
with $p=1$.

Now suppose that $p \ge 2$ and that $\cap_{Z \ne 0} E(Z)$ is one-dimensional.  
Let $\{Z_1, \ldots, Z_p\}$ be an orthonormal basis for $\frakz$. 
Let $X_1$ be a unit vector spanning $\cap_{Z \ne 0} E(Z)$. 
Because of our assumption
that nonzero eigenvalues are $\pm i$, $\{ X_1, J_{Z_j} X_1\}$ is an
orthonormal basis for $E(Z_j)$ for all $j$. By Lemma~\ref{E(Z)} part~(\ref{og}), 
the set $\{J_{Z_1} X_1, \ldots, J_{Z_p} X_1 \}$ is independent
and orthogonal.  Hence $\{X_1, J_{Z_1} X_1, \ldots, J_{Z_p} X_1 \}$ is
an orthonormal basis for $\frakv = \oplus_{i=1}^p E(Z_i)$. It is easy
to check that $(\frakn,Q)$ is isometrically isomorphic to
$(\frakn^p,Q_0)$.

Next suppose that $p \ge 2$ and $\dim \cap_{Z \ne 0} E(Z) \ne 1$. 
Part~(\ref{neq}) of Lemma~\ref{E(Z)} implies $\cap_{Z \ne 0} E(Z) \subseteq
E(Z_1) \cap E(Z_2)$ is at most one-dimensional and hence $ \cap_{Z \ne
0} E(Z) = \{0\}$.  If $p=2$ then
$E(Z_1) \cap E(Z_2)=\{0\}$ would contradict $(\frakn, Q)$ having $J$-rank two. 
Hence we assume from now on that $p \ge 3$.

Let $p=3$ and $\{Z_1, Z_2, Z_3\}$ be an
orthonormal basis for $\frakz$. Again
by Lemma~\ref{E(Z)} we know that $E(Z_1) \cap E(Z_2),
E(Z_2) \cap E(Z_3)$ and $E(Z_3) \cap E(Z_1)$ are one-dimensional.  Choose unit vectors $X_1, X_2, X_3$ with
\[ X_1 \in E(Z_1) \cap E(Z_2), \, X_2 \in E(Z_2) \cap E(Z_3), \,
\text{and} \, X_3 \in E(Z_3) \cap E(Z_1).\] 
The vectors $X_1$, $X_2$, and $X_3$ are linearly independent because $J_{Z_1}$, $J_{Z_2}$, and $J_{Z_3}$
are independent. Thus we have
\[ [X_1, X_2] = \pm Z_2, \, [X_2,X_3] = \pm Z_3, \,  [X_3,X_1]= \pm Z_1, \]
which describes $\frakf_{3,2}$.

Now suppose $p>3$. Let $\{Z_1, Z_2, Z_3, \ldots, Z_p\}$ be an
orthonormal basis for $\frakz$ and choose independent unit vectors $X_1, X_2, X_3$ with
\[ X_1 \in E(Z_1) \cap E(Z_2), \, X_2 \in E(Z_2) \cap E(Z_3), \,
\text{and} \, X_3 \in E(Z_3) \cap E(Z_1)\]
as when $p=3$. 
Consider $E(Z_4)$. By Lemma~\ref{E(Z)}, 
$E(Z_4)$ intersects $E(Z_1)$ in a one-dimensional subspace
spanned by some unit $Y_1 \in \frakv$.  Since the map $\boldR Z \mapsto E(Z)$
from the set of lines in $\myspan \{Z_1,Z_2,Z_3\}$ to the set of
two-planes in $\myspan \{X_1,X_2,X_3\}$ is surjective and $Y_1 \in \myspan
\{X_1,X_3\} $, 
there exists unit $Z$ in $\myspan \{Z_1,Z_2,Z_3\}$ so that $E(Z) \perp Y_1$. 
Note that $Z$ and $Z_4$ are independent.

 Now consider the subspace $F = E(Z_4) \cap E(Z).$    By
 Lemma~\ref{E(Z)}, it is one-dimensional.  
Because $J_{Z_1}, J_{Z_2}, J_{Z_3}$ and $J_{Z_4}$
are independent, the subspace $E(Z_4) \cap  \myspan \{X_1,X_2,X_3\}$
is one-dimensional.
Hence it is spanned by the common element $Y_1.$  But then 
\[  F =  E(Z_4) \cap E(Z) \subseteq E(Z_4) \cap  \myspan
  \{X_1,X_2,X_3\} = \myspan
  \{Y_1\} .\] Since $F$ is one-dimensional, it must be 
spanned by  $Y_1.$    But $Y_1$ is orthogonal to $E(Z),$ a contradiction.
\end{proof}

\bibliographystyle{alpha}
\bibliography{h-like}

\begin{thebibliography}{GMKM13}

\bibitem[Atk83]{atkinson-83}
M.~D. Atkinson.
\newblock Primitive spaces of matrices of bounded rank. {II}.
\newblock {\em J. Austral. Math. Soc. Ser. A}, 34(3):306--315, 1983.

\bibitem[Bea81]{beasley-81}
LeRoy~B. Beasley.
\newblock Spaces of matrices of equal rank.
\newblock {\em Linear Algebra Appl.}, 38:227--237, 1981.

\bibitem[BM15]{boralevi-mezzetti}
Ada Boralevi and Emilia Mezzetti.
\newblock Planes of matrices of constant rank and globally generated vector
  bundles.
\newblock {\em Ann. Inst. Fourier (Grenoble)}, 65(5):2069--2089, 2015.

\bibitem[BTV95]{btv}
J.~Berndt, F.~Tricerri, and L.~Vanhecke.
\newblock {\em Generalized {H}eisenberg groups and {D}amek-{R}icci harmonic
  spaces}, volume 1598 of {\em Lecture Notes in Math.}
\newblock Springer-Verlag, 1995.

\bibitem[Cia00]{ciatti-00}
Paolo Ciatti.
\newblock Scalar products on {C}lifford modules and pseudo-{$H$}-type {L}ie
  algebras.
\newblock {\em Ann. Mat. Pura Appl. (4)}, 178:1--31, 2000.

\bibitem[DDM]{decoste-demeyer-mainkar}
Rachelle DeCoste, Lisa DeMeyer, and Meera Mainkar.
\newblock Graphs and metric 2-step nilpotent {L}ie algebras.
\newblock arXiv:math.DG/1512.07944. To appear in Adv.\ Geom.

\bibitem[DDM11]{decoste-demeyer-mast}
Rachelle~C. DeCoste, Lisa DeMeyer, and Maura~B. Mast.
\newblock Characterizations of {H}eisenberg-like {L}ie algebras.
\newblock {\em J. Lie Theory}, 21(3):711--727, 2011.

\bibitem[DR92]{damek-ricci2}
E.~Damek and F.~Ricci.
\newblock A class of nonsymmetric harmonic {R}iemannian spaces.
\newblock {\em Bull. Amer. Math. Soc.}, 27(1):139--142, 1992.

\bibitem[Ebe94a]{eberlein-1}
Patrick Eberlein.
\newblock Geometry of {$2$}-step nilpotent groups with a left invariant metric.
\newblock {\em Ann. Sci. \'Ecole Norm. Sup. (4)}, 27(5):611--660, 1994.

\bibitem[Ebe94b]{eberlein-2}
Patrick Eberlein.
\newblock Geometry of {$2$}-step nilpotent groups with a left invariant metric.
  {II}.
\newblock {\em Trans. Amer. Math. Soc.}, 343(2):805--828, 1994.

\bibitem[Ebe03a]{eberlein-moduli}
Patrick Eberlein.
\newblock The moduli space of 2-step nilpotent {L}ie algebras of type
  {$(p,q)$}.
\newblock In {\em Explorations in complex and {R}iemannian geometry}, volume
  332 of {\em Contemp. Math.}, pages 37--72. Amer. Math. Soc., Providence, RI,
  2003.

\bibitem[Ebe03b]{eberlein-submersions-lattices}
Patrick Eberlein.
\newblock Riemannian submersions and lattices in 2-step nilpotent {L}ie groups.
\newblock {\em Comm. Anal. Geom.}, 11(3):441--488, 2003.

\bibitem[Ebe04]{eberlein-geometry}
Patrick Eberlein.
\newblock Geometry of 2-step nilpotent {L}ie groups.
\newblock In {\em Modern dynamical systems and applications}, pages 67--101.
  Cambridge Univ. Press, Cambridge, 2004.

\bibitem[EH88]{eisenbud-harris-88}
David Eisenbud and Joe Harris.
\newblock Vector spaces of matrices of low rank.
\newblock {\em Adv. in Math.}, 70(2):135--155, 1988.

\bibitem[EM16]{ellia-menegatti}
Ph. Ellia and P.~Menegatti.
\newblock Spaces of matrices of constant rank and uniform vector bundles.
\newblock {\em Linear Algebra Appl.}, 507:474--485, 2016.

\bibitem[Fla62]{flanders-62}
H.~Flanders.
\newblock On spaces of linear transformations with bounded rank.
\newblock {\em J. London Math. Soc.}, 37:10--16, 1962.

\bibitem[FM11]{fania-mezzetti}
Maria~Lucia Fania and Emilia Mezzetti.
\newblock Vector spaces of skew-symmetric matrices of constant rank.
\newblock {\em Linear Algebra Appl.}, 434(12):2383--2403, 2011.

\bibitem[GH82]{goze-haraguchi-82}
Michel Goze and Yuri Haraguchi.
\newblock Sur les {$r$}-syst\`emes de contact.
\newblock {\em C. R. Acad. Sci. Paris S\'er. I Math.}, 294(2):95--97, 1982.

\bibitem[GK01]{gordon-kerr-01}
Carolyn~S. Gordon and Megan~M. Kerr.
\newblock New homogeneous {E}instein metrics of negative {R}icci curvature.
\newblock {\em Ann. Global Anal. Geom.}, 19(1):75--101, 2001.

\bibitem[GM00]{gornet-mast-00}
Ruth Gornet and Maura~B. Mast.
\newblock The length spectrum of {R}iemannian two-step nilmanifolds.
\newblock {\em Ann. Sci. \'Ecole Norm. Sup. (4)}, 33(2):181--209, 2000.

\bibitem[GMKM13]{godoy-et-al-13}
Mauricio Godoy~Molina, Anna Korolko, and Irina Markina.
\newblock Sub-semi-{R}iemannian geometry of general {$H$}-type groups.
\newblock {\em Bull. Sci. Math.}, 137(6):805--833, 2013.

\bibitem[GW97]{gordon-wilson-97}
Carolyn~S. Gordon and Edward~N. Wilson.
\newblock Continuous families of isospectral {R}iemannian metrics which are not
  locally isometric.
\newblock {\em J. Differential Geom.}, 47(3):504--529, 1997.

\bibitem[Hur22]{hurwitz-23}
A.~Hurwitz.
\newblock \"{U}ber die {K}omposition der quadratischen {F}ormen.
\newblock {\em Math. Ann.}, 88(1-2):1--25, 1922.

\bibitem[IL99]{ilic-landsberg-99}
Bo~Ilic and J.~M. Landsberg.
\newblock On symmetric degeneracy loci, spaces of symmetric matrices of
  constant rank and dual varieties.
\newblock {\em Math. Ann.}, 314(1):159--174, 1999.

\bibitem[Jab11]{jablonski-11}
Michael Jablonski.
\newblock Moduli of {E}instein and non-{E}instein nilradicals.
\newblock {\em Geom. Dedicata}, 152(1):63--84, 2011.

\bibitem[JLP08]{jang-et-al-08}
Changrim Jang, Taehoon Lee, and Keun Park.
\newblock Conjugate loci of 2-step nilpotent {L}ie groups satisfying
  {$J_z^2=\langle Sz,z\rangle A$}.
\newblock {\em J. Korean Math. Soc.}, 45(6):1705--1723, 2008.

\bibitem[Kap80]{kaplan-80}
A.~Kaplan.
\newblock Fundamental solutions for a class of hypoelliptic {PDE} generated by
  compositions of quadratic forms.
\newblock {\em Trans. Amer. Math. Soc.}, 258(1):147--153, 1980.

\bibitem[KT13]{kaplan-tiraboschi-13}
Aroldo Kaplan and Alejandro Tiraboschi.
\newblock Automorphisms of non-singular nilpotent {L}ie algebras.
\newblock {\em J. Lie Theory}, 23(4):1085--1100, 2013.

\bibitem[Lau99]{lauret-99}
Jorge Lauret.
\newblock Modified {$H$}-type groups and symmetric-like {R}iemannian spaces.
\newblock {\em Differential Geom. Appl.}, 10(2):121--143, 1999.

\bibitem[LO14]{lauret-oscari-14}
Jorge Lauret and David Oscari.
\newblock On non-singular 2-step nilpotent {L}ie algebras.
\newblock {\em Math. Res. Lett.}, 21(3):553--583, 2014.

\bibitem[M{\'{e}}t80]{metivier-80}
Guy M{\'{e}}tivier.
\newblock Hypoellipticit\'e analytique sur des groupes nilpotents de rang
  {$2$}.
\newblock {\em Duke Math. J.}, 47(1):195--221, 1980.

\bibitem[MM05]{manivel-mezzetti}
L.~Manivel and E.~Mezzetti.
\newblock On linear spaces of skew-symmetric matrices of constant rank.
\newblock {\em Manuscripta Math.}, 117(3):319--331, 2005.

\bibitem[MS04]{muller-seeger-04}
Detlef M\"uller and Andreas Seeger.
\newblock Singular spherical maximal operators on a class of two step nilpotent
  {L}ie groups.
\newblock {\em Israel J. Math.}, 141:315--340, 2004.

\bibitem[Pay10]{payne10}
Tracy~L. Payne.
\newblock The existence of soliton metrics for nilpotent {L}ie groups.
\newblock {\em Geom. Dedicata}, 145:71--88, 2010.

\bibitem[Rad22]{radon-22}
J.~Radon.
\newblock Lineare {S}charen orthogonaler {M}atrizen.
\newblock {\em Abh. Math. Sem. Univ. Hamburg}, 1(1):1--14, 1922.

\bibitem[Sch15]{schroeder-thesis}
Matthew Schroeder.
\newblock Metric nilpotent {L}ie algebras defined by graphs.
\newblock Master's thesis, Idaho State University, 2015.

\bibitem[Skr02]{skrzynski-02}
Marcin Skrzy\'nski.
\newblock On the linear capacity of algebraic cones.
\newblock {\em Math. Bohem.}, 127(3):453--462, 2002.

\bibitem[Wes87]{westwick-87}
R.~Westwick.
\newblock Spaces of matrices of fixed rank.
\newblock {\em Linear and Multilinear Algebra}, 20(2):171--174, 1987.

\bibitem[Wil82]{wilson-82}
Edward~N. Wilson.
\newblock Isometry groups on homogeneous nilmanifolds.
\newblock {\em Geom. Dedicata}, 12(3):337--346, 1982.

\end{thebibliography}

\end{document}